\newtheorem{dfn}{Definition}[section]
\newtheorem{thm}{Theorem}
\newtheorem{lem}[dfn]{Lemma}
\newtheorem{conj}[dfn]{Conjecture}
\theoremstyle{definition}
\def\R{{\mathbb R}}
\def\Z{{\mathbb Z}}
\def\Sph{{\mathbb S}}
\def\phi{\varphi}
\def\epsilon{\varepsilon}
\renewcommand{\mod}{\operatorname{mod}}
\newcommand{\tr}{\operatorname{tr}}
\newcommand{\vol}{\operatorname{vol}}
\newcommand{\dist}{\operatorname{dist}}
\newcommand{\id}{\operatorname{id}}
\title[Discrete Wirtinger inequality and discrete Laplacians]{A general discrete Wirtinger inequality and spectra of discrete Laplacians}
\author{Ivan Izmestiev}
\date{\today}
\thanks{Supported by the European Research Council under the European Union's Seventh Framework Programme (FP7/2007-2013)/\allowbreak ERC Grant agreement no.~247029-SDModels}
\address{Institut f\"ur Mathematik \\
Freie Universit\"at Berlin \\
Arnimallee 2 \\
D-14195 Berlin \\
 GERMANY}
\email{izmestiev@math.fu-berlin.de}
\begin{document}

\begin{abstract}
We prove an inequality that generalizes the Fan-Taussky-Todd discrete analog of the Wirtinger inequality. It is equivalent to an estimate on the spectral gap of a weighted discrete Laplacian on the circle. The proof uses a geometric construction related to the discrete isoperimetric problem on the surface of a cone.

In higher dimensions, the mixed volumes theory leads to similar results, which allows us to associate a discrete Laplace operator to every geodesic triangulation of the sphere and, by analogy, to every triangulated spherical cone-metric. For a cone-metric with positive singular curvatures, we conjecture an estimate on the spectral gap similar to the Lichnerowicz-Obata theorem.
\end{abstract}

\maketitle

\section{Introduction}
\subsection{A general discrete Wirtinger inequality}
The Wirtinger inequality for $2\pi$-periodic functions says
\begin{equation}
\label{eqn:Wirt}
\int_{\Sph^1} f\, dt = 0 \Rightarrow \int_{\Sph^1} (f')^2 \, dt \ge \int_{\Sph^1} f^2 \, dt
\end{equation}
The following elegant theorem from \cite{FTT55} can be viewed as its discrete analog.
\begin{thm}[Fan-Taussky-Todd]
\label{thm:FTT}
For any $x_1, \ldots, x_n \in \R$ such that
\[
\sum_{i=1}^n x_i = 0
\]
the following inequality holds:
\begin{equation}
\label{eqn:FTT}
\sum_{i=1}^n (x_i - x_{i+1})^2 \ge 4 \sin^2 \frac{\pi}n \sum_{i=1}^n x_i^2
\end{equation}
(here $x_{n+1} = x_1$). Equality holds if and only if there exist $a, b \in \R$ such that
\[
x_k = a \cos \frac{2\pi k}n + b \sin \frac{2\pi k}n
\]
\end{thm}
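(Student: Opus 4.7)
The plan is to reduce the inequality to a Rayleigh-quotient estimate for a circulant matrix and then diagonalize by discrete Fourier analysis. First I rewrite
\[
\sum_{i=1}^n (x_i - x_{i+1})^2 = x^\top L x, \qquad \sum_{i=1}^n x_i^2 = x^\top x,
\]
where $L$ is the $n \times n$ circulant Laplacian of the cycle $C_n$, with $2$ on the diagonal and $-1$ in the neighboring positions (including the two corners). Thus \eqref{eqn:FTT} becomes the assertion that
\[
\min \left\{ \frac{x^\top L x}{x^\top x} : x \neq 0,\ \textstyle\sum_i x_i = 0 \right\} = 4 \sin^2 \frac{\pi}{n},
\]
and the equality case amounts to describing the corresponding eigenspace.

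Next I diagonalize $L$ in the discrete Fourier basis. Because $L$ is circulant, the vectors $v_k = (e^{2\pi i jk/n})_{j=1}^n$ for $k = 0, 1, \ldots, n-1$ are eigenvectors, and a direct computation gives
\[
L v_k = \bigl(2 - 2 \cos \tfrac{2\pi k}{n}\bigr) v_k = 4 \sin^2 \tfrac{\pi k}{n}\, v_k.
\]
These eigenvalues are real and nonnegative, and the $v_k$ form an orthogonal basis of $\C^n$; taking real and imaginary parts gives an orthogonal basis of $\R^n$ consisting of the constant vector, the pairs $c_k = (\cos \tfrac{2\pi jk}{n})_j$ and $s_k = (\sin \tfrac{2\pi jk}{n})_j$ for $1 \le k < n/2$, and (when $n$ is even) the vector $((-1)^j)_j$.

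Now the linear constraint $\sum_i x_i = 0$ is exactly orthogonality to $v_0$, the eigenvector of the eigenvalue $0$. Expanding any admissible $x$ in the remaining orthogonal basis and using Parseval, the Rayleigh quotient $x^\top L x / x^\top x$ is a convex combination of the nonzero eigenvalues $4 \sin^2(\pi k / n)$ for $k = 1, \ldots, n-1$. The smallest of these is attained at $k=1$ and $k=n-1$ and equals $4 \sin^2 \tfrac{\pi}{n}$, which proves the inequality. Equality holds exactly when $x$ lies in the two-dimensional eigenspace of this minimal eigenvalue, spanned by $c_1$ and $s_1$, giving the stated form $x_k = a \cos \tfrac{2\pi k}{n} + b \sin \tfrac{2\pi k}{n}$.

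I do not anticipate a genuine obstacle: the argument is a textbook application of the spectral theorem to a circulant matrix, and the only substantive computation is the eigenvalue formula, which follows from $e^{2\pi i k/n} + e^{-2\pi i k/n} = 2 \cos(2\pi k/n)$. The one place I would be careful is the equality discussion when $n$ is even: the eigenvalue $4\sin^2(\pi k / n)$ for $k = n/2$ equals $4$, which is strictly larger than $4 \sin^2(\pi/n)$, so the minimizing eigenspace remains two-dimensional and the stated characterization is correct. I note that this spectral proof is very different in spirit from the geometric, isoperimetric approach announced in the abstract, which presumably is what yields the author's generalization beyond the cyclic setting.
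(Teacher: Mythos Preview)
Your proof is correct. The paper does not give a standalone proof of this theorem; instead it obtains it as the special case $\alpha_i = 2\pi/n$ of the general discrete Wirtinger inequality (Theorem~\ref{thm:WirtDiscr}), which in turn is deduced from a signature statement (Theorem~\ref{thm:LaplDiscr}) for the matrix $M$ with diagonal entries $-(\cot\alpha_i + \cot\alpha_{i+1})$ and off-diagonal entries $1/\sin\alpha_{i+1}$. For equal angles one has, in your notation,
\[
M = \frac{1}{\sin(2\pi/n)}\bigl(4\sin^2\tfrac{\pi}{n}\,I - L\bigr),
\]
so the paper's assertion that $M$ is negative semidefinite on $\mathbf{1}^{\perp}$ is exactly your Rayleigh-quotient bound, and the paper's final step is precisely the circulant eigenvalue computation $2\cos\frac{2\pi k}{n} - 2\cos\frac{\omega}{n}$ that you also perform. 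The genuine difference is that the paper reaches this equal-angle case only after a deformation from arbitrary $\alpha_i$, with a geometric lemma (parallel $1$-forms on a cone) controlling the rank of $M$ along the deformation. Your direct spectral argument is the clean route to the Fan--Taussky--Todd case; the paper's detour through signature and cone geometry is what buys the generalization to unequal weights, as you correctly anticipated.
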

In the same article \cite{FTT55}, similar inequalities for sequences satisfying the boundary conditions $x_0 = 0$ or $x_0 = x_{n+1} = 0$ were proved.
Several different proofs and generalizations followed, \cite{Shi73,MM82,Red83,Che87,Alz91}.

In the present article we prove the following generalization of Theorem \ref{thm:FTT}.

\begin{thm}
\label{thm:WirtDiscr}
For any $x_1, \ldots, x_n \in \R$ and $\alpha_1, \ldots, \alpha_n \in (0, \pi)$ such that
\[
\sum_{i=1}^n \left(\tan\frac{\alpha_i}2 + \tan\frac{\alpha_{i+1}}2\right) x_i = 0, \quad \sum_{i=1}^n \alpha_i \le 2\pi
\]
the following inequality holds:
\begin{equation}
\label{eqn:DiscrWirt}
\sum_{i=1}^n \frac{(x_i - x_{i+1})^2}{\sin\alpha_{i+1}} \ge \sum_{i=1}^n \left( \tan\frac{\alpha_i}2 + \tan\frac{\alpha_{i+1}}2 \right) x_i^2
\end{equation}
Equality holds if and only if $\sum_{i=1}^n \alpha_i = 2\pi$ and there exist $a, b \in \R$ such that
\begin{equation}
\label{eqn:SinCosDiscr}
x_k = a \cos \sum_{i=1}^k \alpha_i + b \sin \sum_{i=1}^k \alpha_i
\end{equation}
If $\sum_{i=1}^n \alpha_i > 2\pi$, then the inequality \eqref{eqn:DiscrWirt} fails for certain values of $x_i$.
\end{thm}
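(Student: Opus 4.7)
The plan is to interpret~\eqref{eqn:DiscrWirt} geometrically as a second-order isoperimetric inequality for polygons with prescribed edge normals on a cone, and deduce it from the global isoperimetric inequality there. First I would algebraically reduce the inequality. Using the half-angle identity $\tan(\alpha/2)=(1-\cos\alpha)/\sin\alpha$, an expansion of both sides of~\eqref{eqn:DiscrWirt} gives
\[
\sum_{i}\frac{(x_i-x_{i+1})^2}{\sin\alpha_{i+1}} - \sum_{i}\Bigl(\tan\tfrac{\alpha_i}{2}+\tan\tfrac{\alpha_{i+1}}{2}\Bigr)x_i^2 = -2A(x),
\]
where
\[
A(x) := \sum_{i}\frac{x_ix_{i+1}}{\sin\alpha_{i+1}} - \tfrac12\sum_{i} x_i^2(\cot\alpha_i+\cot\alpha_{i+1}).
\]
Writing $L(x) := \sum_i (\tan(\alpha_i/2)+\tan(\alpha_{i+1}/2))x_i$, the constraint becomes $L(x)=0$, and the theorem reduces to: $L(x)=0 \Rightarrow A(x)\le 0$, with equality characterising~\eqref{eqn:SinCosDiscr}.

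Geometrically, set $\theta_i := \sum_{j=1}^i\alpha_j$ and view $x_i$ as a support value in direction $\theta_i$ of a (possibly virtual) polygon on the cone of total angle $\sum\alpha_i$. The standard support-function formula for edges gives
\[
\ell_i(x) = \frac{x_{i-1}-x_i\cos\alpha_i}{\sin\alpha_i} + \frac{x_{i+1}-x_i\cos\alpha_{i+1}}{\sin\alpha_{i+1}},
\]
and a direct computation yields $\sum_i \ell_i(x) = L(x)$ (perimeter) and $\tfrac12\sum_i x_i\ell_i(x) = A(x)$ (Cauchy area formula). The constant vector $\mathbf 1=(1,\ldots,1)$ corresponds to the polygon $\Pi_0$ circumscribed about the unit geodesic disk at the apex, an honest convex polygon precisely when $\sum\alpha_i\le 2\pi$; for it one has $A(\mathbf 1)=\sum_i\tan(\alpha_i/2)>0$ and $L(\mathbf 1)=2A(\mathbf 1)$.

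For $x$ with $L(x)=0$ I would then consider the one-parameter deformation $h^{(\epsilon)} := \mathbf 1 + \epsilon x$. Since the polar form of $A$ satisfies $A(\mathbf 1,x) = L(x)/2 = 0$, one obtains $L(h^{(\epsilon)})=L(\mathbf 1)$ and $A(h^{(\epsilon)}) = A(\mathbf 1)+\epsilon^2 A(x)$. For small $|\epsilon|$ every $\ell_i(h^{(\epsilon)})$ remains positive, so $h^{(\epsilon)}$ is the support vector of a genuine convex polygon on the cone with the same edge directions and the same perimeter as $\Pi_0$. The discrete isoperimetric inequality on the cone --- valid because $\sum\alpha_i\le 2\pi$ makes $\Pi_0$ the area-maximiser in this family --- then yields $A(h^{(\epsilon)})\le A(\mathbf 1)$, hence $A(x)\le 0$. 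Equality forces $h^{(\epsilon)}$ to be a translate of $\Pi_0$, which in turn requires a flat apex ($\sum\alpha_i=2\pi$); the support functions of such translates are exactly $a\cos\theta_i+b\sin\theta_i$, recovering~\eqref{eqn:SinCosDiscr}.

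When $\sum\alpha_i>2\pi$ the apex carries negative singular curvature, $\Pi_0$ becomes a saddle point of area at fixed perimeter, and $A$ acquires positive directions on $\{L=0\}$; a concrete counterexample to~\eqref{eqn:DiscrWirt} arises from, say, equal $\alpha_i=\alpha$ with $\alpha>2\pi/n$ by diagonalising the resulting circulant form and selecting a suitable low-frequency Fourier eigenvector. The main obstacle I anticipate is a self-contained proof of the discrete isoperimetric inequality on the cone with prescribed edge normals: the flat case is classical (via Steiner symmetrisation or the polar form of Minkowski's mixed-area inequality), whereas the cone case needs separate justification, most cleanly by unrolling the cone to a planar sector and adapting the classical argument, exploiting the convexity of the parameter space of admissible support vectors.
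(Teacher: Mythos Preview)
Your algebraic reduction and geometric interpretation are correct and coincide with the paper's: $A(x)=\tfrac12\langle Mx,x\rangle$ for the matrix $M$ of Theorem~\ref{thm:LaplDiscr}, and $L(x)=\langle Mx,\mathbf 1\rangle$, so the whole question is whether $M$ is negative semidefinite on $\{L=0\}$. But you have inverted the paper's logic. The paper first proves Theorem~\ref{thm:LaplDiscr} (the signature of $M$) and then derives \emph{both} Theorem~\ref{thm:WirtDiscr} and the cone isoperimetric Theorem~\ref{thm:IsoperDiscr} from it as corollaries; you instead want to assume the isoperimetric inequality and deduce Wirtinger from it.

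This moves the entire difficulty into the step you yourself flag as the ``main obstacle'', and your proposed resolution (``unroll to a sector and adapt the classical proof'') does not obviously work. Unrolling a closed polygon on $C_\omega$ with $\omega<2\pi$ produces an \emph{open} polyline in a planar sector whose two endpoints are related by the rotation through $\omega$; this is not the setting of L'Huilier or Minkowski, and neither Steiner symmetrisation nor the polar form of the mixed-area inequality applies without a genuinely new argument. The paper sidesteps this by attacking the signature of $M$ directly: it identifies $\ker M$ with the space of parallel $1$-forms on $C_\omega\setminus\{0\}$ (dimension $2$ if $\omega\in 2\pi\Z$, else $0$), then deforms the $\alpha_i$ linearly to equal angles $\omega/n$ while keeping $\sum\alpha_i$ fixed so that the rank of $M_t$ is constant along the homotopy, and finally reads off the signature from the explicit eigenvalues $2\cos(2\pi k/n)-2\cos(\omega/n)$ of the resulting circulant. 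Your circulant counterexample for $\sum\alpha_i>2\pi$ is exactly this last computation; the idea you are missing is to run the same deformation-plus-circulant argument to obtain the full signature, which turns the cone isoperimetric inequality into a consequence rather than an input.
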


Theorem \ref{thm:FTT} is a special case of Theorem \ref{thm:WirtDiscr} for $\alpha_i = \frac{2\pi}n$.

We obtain Theorem \ref{thm:WirtDiscr} as a consequence of the following.

\begin{thm}
\label{thm:LaplDiscr}
Let $\alpha_1, \ldots, \alpha_n \in (0, \pi)$. Then the circulant tridiagonal $n \times n$ matrix
\begin{equation*}
M =
\begin{pmatrix}
-(\cot \alpha_1 + \cot \alpha_2) & \frac1{\sin\alpha_2} & \ldots & \frac1{\sin\alpha_1}\\
\frac1{\sin\alpha_2} & -(\cot \alpha_2 + \cot \alpha_3) & \ddots & \vdots \\
\vdots & \ddots & \ddots & \frac1{\sin\alpha_n} \\
\frac1{\sin\alpha_1} & \ldots & \frac1{\sin\alpha_n} & -(\cot \alpha_n + \cot \alpha_1)
\end{pmatrix}
\end{equation*}
has the signature
\begin{align*}
(2m-1, 2, n-2m-1), &\text{ if } \sum_{i=1}^n \alpha_i = 2m\pi, \, m \ge 1\\
(2m+1, 0, n-2m-1), &\text{ if } 2m\pi < \sum_{i=1}^n \alpha_i < 2(m+1)\pi, \, m \ge 0
\end{align*}
Here $(p,q,r)$ means $p$ positive, $q$ zero, and $r$ negative eigenvalues.

The vector ${\bf 1} = (1, 1, \ldots, 1)$ is always a positive vector for the associated quadratic form:
\[
\langle M {\bf 1}, {\bf 1} \rangle > 0
\]
If $\sum_{i=1}^n \alpha_i \equiv 0 (\mod 2\pi)$, then $\ker M$ consists of all vectors of the form \eqref{eqn:SinCosDiscr}.
\end{thm}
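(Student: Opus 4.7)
The strategy combines an explicit eigenvalue computation in the symmetric case $\alpha_1 = \cdots = \alpha_n = \theta$ with a continuity/deformation argument. First I would identify the kernel geometrically. For unit vectors $v_j = (\cos \beta_j, \sin \beta_j) \in \R^2$ with $\beta_j := \sum_{i=1}^j \alpha_i$, a direct rotation-formula calculation yields
\[
\frac{v_{j-1}}{\sin \alpha_j} + \frac{v_{j+1}}{\sin \alpha_{j+1}} = (\cot \alpha_j + \cot \alpha_{j+1})\, v_j.
\]
Taking the inner product with an arbitrary $w \in \R^2$ shows that $x_j = a\cos\beta_j + b\sin\beta_j$ is annihilated row-by-row by $M$; when $\sum_i \alpha_i \in 2\pi\Z$ these sequences are $n$-periodic and exhibit a $2$-dimensional subspace of $\ker M$. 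Conversely, $(Mx)_j = 0$ is a second-order linear recurrence with nonvanishing coefficients (since each $\sin \alpha_j > 0$), so its periodic solution space has dimension at most $2$. Hence $\det M = 0$ if and only if $\sum_i \alpha_i \in 2\pi\Z$, and in that case $\ker M$ is exactly the span of the vectors \eqref{eqn:SinCosDiscr}.

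Next I would deform. The parameter sets
\[
\Omega_m^\circ = \{ \alpha \in (0,\pi)^n : 2m\pi < \textstyle\sum_i \alpha_i < 2(m+1)\pi \}, \quad \Omega_m = \{ \alpha \in (0,\pi)^n : \textstyle\sum_i \alpha_i = 2m\pi \}
\]
are convex, hence connected. Eigenvalues of $M$ depend continuously on $\alpha$, so the signature can change only when some eigenvalue crosses zero. By the first paragraph this never happens on $\Omega_m^\circ$, and on $\Omega_m$ exactly two eigenvalues sit at zero throughout, so the numbers of strictly positive and strictly negative eigenvalues are both locally constant there as well. It therefore suffices to evaluate the signature at one representative per region. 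I take $\alpha_i \equiv \theta$ with $\theta = 2m\pi/n$ on $\Omega_m$ and $\theta = (2m+1)\pi/n$ on $\Omega_m^\circ$ (or, in the edge case $n = 2m+1$, any $\theta \in (2m\pi/n, \pi)$). With constant $\alpha_i$ the matrix $M$ becomes real circulant with eigenvalues
\[
\lambda_k = \frac{2(\cos(2\pi k/n) - \cos \theta)}{\sin \theta}, \qquad k = 0, 1, \ldots, n-1,
\]
and a direct count of indices $k$ for which $\lambda_k$ is positive, zero, or negative gives exactly $(2m-1, 2, n-2m-1)$ and $(2m+1, 0, n-2m-1)$ respectively.

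For the statement on $\mathbf 1$ one computes directly
\[
(M {\bf 1})_j = \frac{1-\cos\alpha_j}{\sin\alpha_j} + \frac{1 - \cos \alpha_{j+1}}{\sin \alpha_{j+1}} = \tan \frac{\alpha_j}2 + \tan \frac{\alpha_{j+1}}2 > 0,
\]
so $\langle M {\bf 1}, {\bf 1}\rangle = 2 \sum_j \tan (\alpha_j/2) > 0$. I expect the main obstacle to be the rigidity step on the critical locus $\Omega_m$: as $\alpha$ varies in $\Omega_m$ the two kernel vectors from \eqref{eqn:SinCosDiscr} deform smoothly, but one must be sure that no third eigenvalue sneaks through zero during the deformation. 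The sharp bound $\dim \ker M \leq 2$ coming from the second-order recurrence is precisely what rules this out.
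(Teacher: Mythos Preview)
Your plan is correct and follows the same overall architecture as the paper: first pin down $\dim\ker M$ as a function of $\omega = \sum_i \alpha_i$, then deform to the fully symmetric configuration $\alpha_i \equiv \omega/n$ along a path on which the rank stays constant, and finally read off the signature from the circulant eigenvalues $\lambda_k = \frac{2}{\sin\theta}\bigl(\cos(2\pi k/n) - \cos\theta\bigr)$. The paper's linear homotopy $\alpha_i(t) = (1-t)\alpha_i + t\,\omega/n$ sits inside your convex region $\{\sum_i\alpha_i = \omega\}$, so the deformation step is literally the same.

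The one substantive difference is the proof of the rank lemma. The paper (Lemma~\ref{lem:Rank}) argues geometrically: a vector $x \in \ker M$ is reinterpreted as a parallel $1$-form on the punctured cone $C_\omega \setminus \{0\}$ via the dual vectors $X_i$ of Figure~\ref{fig:CompareForms}, and such a form exists iff the holonomy of $C_\omega$ is trivial, i.e.\ $\omega \in 2\pi\Z$. Your route is purely algebraic: you observe that $(Mx)_j = 0$ is a second-order recurrence whose two-dimensional solution space is explicitly $\{a\cos\beta_j + b\sin\beta_j\}$, and $n$-periodicity then forces $(a,b)$ to be fixed by the rotation through $\omega$, hence zero unless $\omega \in 2\pi\Z$. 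Both arguments are short; yours is more self-contained and elementary, while the paper's version makes the link to the isoperimetric problem on $C_\omega$ transparent and is what motivates the higher-dimensional generalisation in Section~\ref{sec:HighDim}. Your treatment of the edge case $n = 2m+1$ and the direct verification $\langle M\mathbf 1, \mathbf 1\rangle = 2\sum_j \tan(\alpha_j/2) > 0$ are fine.
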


The relation between Theorems \ref{thm:WirtDiscr} and \ref{thm:LaplDiscr} is the same as between the Wirtinger inequality \eqref{eqn:Wirt} and the spectral gap of the Laplacian on $\Sph^1$.
Thus we can interpret the matrix $M$ in Theorem \ref{thm:LaplDiscr} as (the weak form) of the operator $\Delta + \id$.

\subsection{The discrete isoperimetric problem: a generalization of the L'Huilier theorem}
About two hundred years ago L'Huilier proved that a circumscribed polygon has the greatest area among all polygons with the same side directions and the same perimeter.
Theorems \ref{thm:WirtDiscr} and \ref{thm:LaplDiscr} are related to a certain generalization of the L'Huilier theorem. Again, this imitates the smooth case, as the Wirtinger inequality first appeared in \cite{Bla56} in connection with the isoperimetric problem in the plane.


Define the \emph{euclidean cone of angle} $\omega > 0$ as the space $C_\omega$ resulting from gluing isometrically the sides of an infinite angular region of size $\omega$. (If $\omega > 2\pi$, then paste together several smaller angles, or cut the infinite cyclic branched cover of $\R^2$.)

\begin{figure}[ht]
\centering
\begin{picture}(0,0)%
\includegraphics{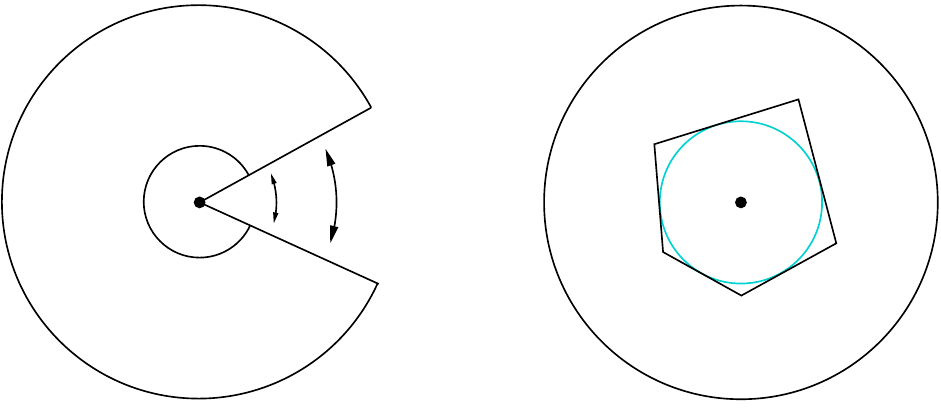}%
\end{picture}%
\setlength{\unitlength}{4144sp}%
\begingroup\makeatletter\ifx\SetFigFont\undefined%
\gdef\SetFigFont#1#2#3#4#5{%
  \reset@font\fontsize{#1}{#2pt}%
  \fontfamily{#3}\fontseries{#4}\fontshape{#5}%
  \selectfont}%
\fi\endgroup%
\begin{picture}(4296,1874)(-12,-1025)
\put(586,-376){\makebox(0,0)[lb]{\smash{{\SetFigFont{9}{10.8}{\rmdefault}{\mddefault}{\updefault}{\color[rgb]{0,0,0}$\omega$}%
}}}}
\put(3826,-961){\makebox(0,0)[lb]{\smash{{\SetFigFont{9}{10.8}{\rmdefault}{\mddefault}{\updefault}{\color[rgb]{0,0,0}$C_\omega$}%
}}}}
\end{picture}%
\caption{The discrete isoperimetric problem on a cone.}
\label{fig:COmega}
\end{figure}

\begin{thm}
\label{thm:IsoperDiscr}
If $\omega \le 2\pi$, then every polygon with the sides tangent to a circle centered at the apex of $C_\omega$ encloses the largest area among all polygons that have the same side directions and the same perimeter.

If $\omega < 2\pi$, then the optimal polygon is unique.

If $\omega = 2\pi$, then the optimal polygon is unique up to translation.

If $\omega > 2\pi$, then the circumcribed polygon is not optimal.
\end{thm}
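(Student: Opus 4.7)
The plan is to parameterize each polygon with prescribed side directions on $C_\omega$ by its support numbers $h_i$, i.e.\ the signed distances from the apex to the lines containing the sides. Let $\alpha_i$ denote the exterior angle between sides $i-1$ and $i$; the closing condition around the apex forces $\sum_i \alpha_i = \omega$. Writing each vertex as the intersection of two adjacent support lines (after unfolding a neighborhood into a plane) one computes the side length
\[
\ell_i \;=\; \frac{h_{i-1}}{\sin\alpha_i} \;-\; h_i(\cot\alpha_i + \cot\alpha_{i+1}) \;+\; \frac{h_{i+1}}{\sin\alpha_{i+1}} \;=\; (Mh)_i,
\]
where $M$ is precisely the matrix of Theorem~\ref{thm:LaplDiscr} for the data $(\alpha_1,\dots,\alpha_n)$. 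Consequently
\[
P \;=\; \langle M\mathbf{1},\,h\rangle, \qquad A \;=\; \tfrac12\sum_i h_i\ell_i \;=\; \tfrac12\langle h,\,Mh\rangle,
\]
using $(M\mathbf{1})_i = \tan\frac{\alpha_i}{2} + \tan\frac{\alpha_{i+1}}{2}$. The isoperimetric problem becomes: maximize the quadratic form $\tfrac12\langle h, Mh\rangle$ on the affine hyperplane $\langle M\mathbf{1}, h\rangle = P$, over the open admissible domain $\{h : (Mh)_i>0\text{ for all }i\}$.

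The circumscribed polygon of inradius $r$ corresponds to $h = r\mathbf{1}$ and satisfies the Lagrange critical-point equation $Mh = \lambda M\mathbf{1}$ trivially. Writing a competing admissible $h = r\mathbf{1} + v$ with $\langle M\mathbf{1}, v\rangle = 0$ (which is exactly the constraint that $P$ is preserved), the symmetry of $M$ gives
\[
A(h) \;=\; A(r\mathbf{1}) \;+\; \tfrac12\langle v, Mv\rangle.
\]
Optimality of $r\mathbf{1}$ thus reduces to the sign of $\langle\cdot,M\cdot\rangle$ on the $M$-orthogonal complement $\mathbf{1}^{\perp_M}$. Since Theorem~\ref{thm:LaplDiscr} asserts $\langle M\mathbf{1},\mathbf{1}\rangle > 0$, the line $\mathrm{span}(\mathbf{1})$ is $M$-positive definite, so Sylvester's law of inertia gives the signature of $M|_{\mathbf{1}^{\perp_M}}$ as the global signature with one positive index removed.

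Feeding in the three cases of Theorem~\ref{thm:LaplDiscr}: for $\omega < 2\pi$ the complement has signature $(0,0,n-1)$, hence is negative definite, yielding a unique strict maximum. For $\omega = 2\pi$ the complement has signature $(0,2,n-3)$, so the maximum is attained on a two-dimensional affine subspace; the kernel directions, given by \eqref{eqn:SinCosDiscr}, are exactly the infinitesimal effects of translations $h_i \mapsto h_i + \mathbf{t}\cdot n_i$, since the outward normal is $n_i = (\cos\sum_{j\le i}\alpha_j,\,\sin\sum_{j\le i}\alpha_j)$, so uniqueness holds modulo translation. For $\omega > 2\pi$ the complement acquires positive directions, so any sufficiently small admissible $v$ along such a direction strictly increases area at fixed perimeter; hence the circumscribed polygon is not optimal.

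The main obstacle is geometric rather than algebraic: one needs to verify that the support-number parameterization is a bijection between the open cone $\{(Mh)_i>0\}$ and the space of polygons with the prescribed side directions enclosing the apex, and that this cone is convex and contains $r\mathbf{1}$ in its interior, so that the Lagrange argument produces a genuine global optimum on the relevant connected component. For $\omega>2\pi$ one must additionally note that admissibility survives a small perturbation in the $M$-positive direction, which is automatic since $r\mathbf{1}$ is an interior point of the admissible cone, yielding a concrete polygon beating the circumscribed one. Finally, the identification of the two-dimensional kernel with translations in the case $\omega = 2\pi$, though visibly natural, deserves an explicit check from the formula for $n_i$ above.
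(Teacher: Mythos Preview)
Your proposal is correct and follows essentially the same route as the paper: express perimeter and area as $\langle M\mathbf{1},h\rangle$ and $\tfrac12\langle Mh,h\rangle$, write $h=r\mathbf{1}+v$ with $\langle M\mathbf{1},v\rangle=0$, and reduce optimality to the signature of $M$ on $\mathbf{1}^{\perp_M}$ via Theorem~\ref{thm:LaplDiscr}. You are in fact more explicit than the paper about the Sylvester step, the identification of the kernel with translations when $\omega=2\pi$, and the admissibility issues needed to make the Lagrange argument global; the paper dispatches these with a single sentence.
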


\subsection{The discrete Wirtinger inequality with Dirichlet boundary conditions}
In a similar way we generalize another inequality from \cite{FTT55}.
\begin{thm}[Fan-Taussky-Todd]
For any $x_1, \ldots, x_n \in \R$ the following inequality holds:
\[
\sum_{i=0}^n (x_i - x_{i+1})^2 \ge 4 \sin^2 \frac{\pi}{2(n+1)} \sum_{i=0}^n x_i^2
\]
where $x_0 = x_{n+1} = 0$. Equality holds if and only if there is $a \in \R$ such that
\[
x_k = a \sin \frac{k\pi}{n+1}
\]
\end{thm}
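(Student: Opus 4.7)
The plan is to reduce the Dirichlet case to the periodic case (Theorem \ref{thm:FTT}, or equivalently the uniform-angle case of Theorem \ref{thm:WirtDiscr}) via an odd-reflection doubling trick.

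First, I set $N = 2(n+1)$ and define a $2(n+1)$-periodic extension $y_0, y_1, \dots, y_{N-1}$ of $x_0, x_1, \dots, x_{n+1}$ by odd reflection across the vanishing endpoints: $y_k = x_k$ for $0 \le k \le n+1$, and $y_{n+1+k} = -x_{n+1-k}$ for $1 \le k \le n$. Because $x_0 = x_{n+1} = 0$, this is well-defined; because of the odd symmetry $y_{N-k} = -y_k$, we automatically have $\sum_{k=0}^{N-1} y_k = 0$, so the hypothesis of Theorem \ref{thm:FTT} is satisfied.

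Next, I apply Theorem \ref{thm:FTT} to $y_0, \dots, y_{N-1}$, obtaining
\[
\sum_{k=0}^{N-1} (y_k - y_{k+1})^2 \ge 4 \sin^2 \frac{\pi}{N} \sum_{k=0}^{N-1} y_k^2 = 4 \sin^2 \frac{\pi}{2(n+1)} \sum_{k=0}^{N-1} y_k^2.
\]
Now I compute both sides in terms of the $x_i$. Since $y_0 = y_{n+1} = 0$ and $|y_{n+1+k}| = |x_{n+1-k}|$, a reindexing gives $\sum_{k=0}^{N-1} y_k^2 = 2 \sum_{i=0}^{n} x_i^2$. For the differences, the same reflection pairs each consecutive difference on the first half $\{0,\dots,n\}$ with a copy on the reflected half, and the two differences straddling the zero endpoints ($y_n - y_{n+1}$ and $y_{n+1} - y_{n+2}$, as well as $y_{N-1} - y_0$ and $y_0 - y_1$) combine to give the correct contribution: a careful bookkeeping of the $N$ terms yields $\sum_{k=0}^{N-1} (y_k - y_{k+1})^2 = 2 \sum_{i=0}^{n} (x_i - x_{i+1})^2$. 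Dividing both sides of the doubled inequality by $2$ produces exactly the claimed estimate.

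For the equality case, Theorem \ref{thm:FTT} forces $y_k = a \cos \frac{2\pi k}{N} + b \sin \frac{2\pi k}{N}$. The imposed odd symmetry $y_{N-k} = -y_k$ kills the even cosine term, so $a = 0$; the boundary condition $y_{n+1} = b \sin \pi = 0$ is then automatic, and we read off $x_k = b \sin \frac{k\pi}{n+1}$, as claimed. Conversely, any such $x_k$ extends to a pure sine on the circle of length $N$ and saturates the doubled inequality, hence saturates the original.

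The argument involves no real obstacle beyond the bookkeeping of pairing the $N$ terms on both sides of the doubled inequality; the only point requiring any care is verifying that the differences straddling the zero positions ($k=n$ and $k=n+1$, and $k=N-1$ and $k=0$) really produce two copies of $(x_n - x_{n+1})^2$ and $(x_0 - x_1)^2$, which works precisely because of the odd extension across $x_0 = x_{n+1} = 0$.
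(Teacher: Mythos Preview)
Your reflection argument is correct. The odd extension to length $N=2(n+1)$ satisfies $\sum y_k=0$, and a direct check (which you outlined accurately, including the ``straddling'' terms at $k=n+1$ and $k=2n+1$) shows that both the difference sum and the square sum exactly double. Applying the periodic Fan--Taussky--Todd inequality and dividing by $2$ gives the estimate, and the odd symmetry $y_{N-k}=-y_k$ forces the cosine coefficient to vanish in the equality case.

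This is, however, a genuinely different route from the paper's. The paper does not prove this theorem directly; it obtains it as the uniform-angle special case $\alpha_i=\pi/(n+1)$ of Theorem~\ref{thm:WirtDiscrBdry}, which in turn is deduced from the signature computation for the (non-circulant) tridiagonal matrix in Theorem~\ref{thm:LaplDiscrBdry}. That proof proceeds by a geometric identification of the kernel with parallel $1$-forms on an angular sector, a rank-preserving deformation to equal angles, and then the explicit spectrum $\{2\cos\frac{\pi k}{n+1}-2\cos\frac{\omega}{n+1}\}$ of the limiting Toeplitz matrix. Your doubling trick is more elementary and requires no new spectral computation beyond the periodic case already in hand; on the other hand, the paper's signature approach is what generalizes cleanly to arbitrary angles $\alpha_i$ (and, in spirit, to higher dimensions), where a naive reflection would not immediately produce a sequence satisfying the weighted zero-average hypothesis of Theorem~\ref{thm:WirtDiscr}.
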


\begin{thm}
\label{thm:WirtDiscrBdry}
For any $x_0, \ldots, x_{n+1} \in \R$ and $\alpha_1, \ldots, \alpha_{n+1} \in (0, \pi)$ such that
\[
x_0 = x_{n+1} = 0, \quad \sum_{i=1}^n \alpha_i \le \pi
\]
the following inequality holds:
\begin{equation}
\label{eqn:DiscrWirtDir}
\sum_{i=0}^n \frac{(x_i - x_{i+1})^2}{\sin\alpha_{i+1}} \ge \sum_{i=1}^n \left( \tan\frac{\alpha_i}2 + \tan\frac{\alpha_{i+1}}2 \right) x_i^2
\end{equation}
Equality holds if and only if $\sum_{i=1}^{n+1} \alpha_i = \pi$ and there is $a \in \R$ such that
\begin{equation*}
x_k = a \sin \sum_{i=1}^k \alpha_i
\end{equation*}
If $\sum_{i=1}^{n+1} \alpha_i > \pi$, then the inequality \eqref{eqn:DiscrWirt} fails for certain values of $x_i$.
\end{thm}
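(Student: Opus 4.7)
The plan is to reduce Theorem \ref{thm:WirtDiscrBdry} to the already-proved Theorem \ref{thm:WirtDiscr} by an \emph{odd reflection} trick, which discretizes the standard way of extending a function with Dirichlet data on $[0,\pi]$ to an odd, $2\pi$-periodic function on the doubled interval.

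Concretely, I build a periodic system with $N=2n+2$ indices as follows. Set $\tilde{x}_i=x_i$ for $0\le i\le n+1$ and $\tilde{x}_{n+1+k}=-\tilde{x}_{n+1-k}$ for $1\le k\le n$, extended cyclically (so $\tilde{x}_{n+1}=\tilde{x}_{2n+2}=0$). For the angles, set $\tilde\alpha_i=\alpha_i$ for $1\le i\le n+1$ and $\tilde\alpha_{n+1+k}=\alpha_{n+2-k}$ for $1\le k\le n+1$, so that the sequence $(\tilde\alpha_i)$ is the palindrome $\alpha_1,\ldots,\alpha_{n+1},\alpha_{n+1},\alpha_n,\ldots,\alpha_1$. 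Then $\tilde\alpha_i\in(0,\pi)$ and $\sum_{i=1}^{N}\tilde\alpha_i=2\sum_{i=1}^{n+1}\alpha_i\le 2\pi$. The weights $\tilde w_i:=\tan(\tilde\alpha_i/2)+\tan(\tilde\alpha_{i+1}/2)$ inherit the palindrome symmetry $\tilde w_{n+1+k}=\tilde w_{n+1-k}$, while $\tilde{x}$ is antisymmetric about $i=n+1$, so $\sum_{i=1}^{N}\tilde w_i\tilde x_i=0$ automatically and all hypotheses of Theorem \ref{thm:WirtDiscr} are satisfied.

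Plugging $(\tilde{x},\tilde\alpha)$ into \eqref{eqn:DiscrWirt}, the terms at $i=n+1$ and $i=2n+2$ drop out (the $x_i^2$ factor vanishes and the $(\tilde x_i-\tilde x_{i+1})^2$ factors pair symmetrically), and the remaining terms pair up by the reflection so that both sides equal exactly twice the corresponding sides of \eqref{eqn:DiscrWirtDir}. Division by $2$ gives Theorem \ref{thm:WirtDiscrBdry}. In the equality case, Theorem \ref{thm:WirtDiscr} forces $\sum\tilde\alpha_i=2\pi$, i.e.\ $\sum_{i=1}^{n+1}\alpha_i=\pi$, and writes $\tilde{x}_k=a\cos S_k+b\sin S_k$ with $S_k=\sum_{j=1}^k\tilde\alpha_j$; the condition $\tilde{x}_0=0$ forces $a=0$, and restriction to $1\le k\le n$ yields $x_k=b\sin\sum_{j=1}^k\alpha_j$, as claimed.

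The step I expect to be most delicate is the failure clause for $\sum_{i=1}^{n+1}\alpha_i>\pi$: Theorem \ref{thm:WirtDiscr} supplies a periodic vector on which \eqref{eqn:DiscrWirt} fails, but this vector need not be odd. I would address this by noting that the palindromic choice of $\tilde\alpha$ makes the doubled quadratic form commute with the involution $\sigma:\tilde{x}_j\mapsto -\tilde{x}_{-j}$ (indices mod $N$); the form therefore splits as $\tilde Q=Q_+\oplus Q_-$ on $V_+\oplus V_-$, where $V_+$ is the $+1$-eigenspace of $\sigma$ (odd vectors with $\tilde x_0=\tilde x_{n+1}=0$, of dimension $n$), and $Q_+$ agrees up to a factor of $2$ with the Dirichlet form on the left-hand side minus the right-hand side of \eqref{eqn:DiscrWirtDir}. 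It then suffices to exhibit one negative direction for $Q_+$ once $\sum\tilde\alpha_i>2\pi$, which I would obtain by a perturbation argument around $\sum\tilde\alpha_i=2\pi$: there the vector $x_k=\sin\sum_{j=1}^k\alpha_j$ already lies in $V_+$ and in the kernel of $Q_+$, and a first-order variation in the direction of growing $\alpha_i$'s moves this eigenvalue into the negative half-line, consistent with the sign change of the signature of $\tilde M$ given by Theorem \ref{thm:LaplDiscr}.
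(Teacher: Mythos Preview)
Your doubling argument is a genuinely different route from the paper's. The paper does \emph{not} reduce Theorem~\ref{thm:WirtDiscrBdry} to Theorem~\ref{thm:WirtDiscr}; instead it states and proves a separate signature result (Theorem~\ref{thm:LaplDiscrBdry}) for the non-circulant tridiagonal matrix, via the same ``parallel $1$-forms $+$ deformation to equal angles'' method used for Theorem~\ref{thm:LaplDiscr}, and then reads off the inequality, the equality case, and the failure clause directly from that signature. Your odd-reflection trick is cleaner for the first two parts: the verification that the palindromic $\tilde\alpha$ makes $\tilde M$ commute with the reflection $\tau:e_j\mapsto e_{-j}$, that the odd subspace $V_+$ automatically lies in $\{\langle \tilde M\,\cdot\,,\mathbf 1\rangle=0\}$, and that both sides of \eqref{eqn:DiscrWirt} double, is correct, and the equality analysis ($\tilde x_0=0\Rightarrow a=0$) is fine.

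The gap is in the failure clause. Your perturbation argument only produces a negative direction for $Q_+$ when $\sum_{i=1}^{n+1}\alpha_i$ is slightly larger than $\pi$; it does not, as written, cover all $\sum\alpha_i>\pi$. The fix is available inside your own framework, but it needs more than a first-order computation: since $\tilde M$ commutes with $\tau$, the restrictions $\tilde M|_{V_\pm}$ are nondegenerate whenever $\tilde M$ is, i.e.\ for $\sum\tilde\alpha\notin 2\pi\Z$; hence the signature of $Q_+$ is constant on each stratum $\{m\pi<\sum\alpha_i<(m+1)\pi\}$. At each wall $\sum\alpha_i=m\pi$ the $2$-dimensional kernel of $\tilde M$ splits as $\cos\tilde S\in V_-$, $\sin\tilde S\in V_+$ (your palindrome gives $\tilde S_{-j}\equiv-\tilde S_j$), so exactly one eigenvalue of $Q_+$ crosses zero there, and by the global signature count of Theorem~\ref{thm:LaplDiscr} it moves from positive to negative. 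Iterating, $Q_+$ has exactly $m$ negative eigenvalues on the $m$-th stratum, hence at least one whenever $\sum\alpha_i>\pi$. This completes the argument, but note that in doing so you have essentially rederived Theorem~\ref{thm:LaplDiscrBdry} through the doubling; the paper's direct proof is shorter for this particular clause.
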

If $\alpha_i = \frac{\pi}{n+1}$ for all $i$, then this becomes a Fan-Taussky-Todd inequality.

Similarly to the above, the inequality follows from a theorem about the signature of a tridiagonal (this time non-circulant) matrix, see Section \ref{sec:WirtBdry}. It is related to a discrete version of the Dido isoperimetric problem.

\subsection{Related work}
Milovanovi{\'c} and Milovanovi{\'c} \cite{MM82} studied the question of finding optimal constants $A$ and $B$ in the inequalities
\[
A \sum_{i=0}^n p_i x_i^2 \le \sum_{i=0}^n r_i(x_i-x_{i+1})^2 \le B \sum_{i=0}^n p_i x_i^2
\]
for given sequences $(p_i)$ and $(r_i)$. They dealt only with the Dirichlet boundary conditions $x_0 = 0$ or $x_0 = x_{n+1} = 0$, and the answer is rather implicit: $A$ and $B$ are the minimum and the maximum zeros of a recursively defined polynomial (the characteristic polynomial of the corresponding quadratic form).

There is a partial generalization of Theorems \ref{thm:WirtDiscr} and \ref{thm:LaplDiscr} to higher dimensions. Instead of the angles $\alpha_1, \ldots, \alpha_n$, one fixes a geodesic Delaunay triangulation of $\Sph^{d-1}$, and the matrix $M$ is defined as the Hessian of the volume of polytopes whose normal fan is the given triangulation. The signature of $M$ follows from the Minkowski inequality for mixed volumes. A full generalization would deal with a Delaunay triangulated spherical cone-metric on $\Sph^{d-1}$ with positive singular curvatures, and would be a discrete analog of the Lichnerowicz theorem on the spectral gap for metrics with Ricci curvature bounded below. See \cite{Izm14} and Section \ref{sec:HighDim} below for details.

The spectral gap of the Laplacian on ``short circles'' plays a crucial role in the rigidity theorems for hyperbolic cone-manifolds with positive singular curvatures \cite{HK98,MM09,Wei09} based on Cheeger's extension of the Hodge theory to singular spaces \cite{Che83}. As elementary as it is, Theorem \ref{thm:WirtDiscr} could provide a basis for spectral estimates for natural discrete Laplacians, and in particular an alternative approach to the rigidity of cone-manifolds.

\subsection{Acknowledgment}
This article was written during author's visit to the Pennsylvania State University.

\section{Wirtinger, Laplace, and isoperimetry in the smooth case}
\label{sec:SmoothCase}
\subsection{Wirtinger's inequality and the spectral gap}
\begin{thm}[Wirtinger's lemma]
\label{thm:WirtSm}
Let $f \colon \Sph^1 \to \R$ be a $C^\infty$-function with zero average:
\[
\int_{\Sph^1} f(t) \, dt = 0
\]
Then
\[
\int_{\Sph^1} f^2(t) \, dt \le \int_{\Sph^1} (f')^2 \, dt
\]
Equality holds if and only if
\begin{equation}
\label{eqn:SinCos}
f(t) = a \cos t + b \sin t
\end{equation}
for some $a, b \in \R$.
\end{thm}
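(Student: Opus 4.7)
The natural approach is to expand $f$ in a Fourier series on $\Sph^1$ and reduce the inequality to a coefficient-by-coefficient comparison, exploiting the eigenfunctions of $-\frac{d^2}{dt^2}$.

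First I would write
\[
f(t) = \frac{a_0}{2} + \sum_{k \ge 1} \bigl( a_k \cos kt + b_k \sin kt \bigr),
\]
with convergence in $C^\infty$ guaranteed by smoothness (the Fourier coefficients decay faster than any polynomial, so termwise differentiation is justified). The zero-average hypothesis gives $a_0 = 0$ immediately, since $a_0 = \frac{1}{\pi}\int_{\Sph^1} f(t)\,dt$.

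Next I would compute both sides by Parseval's formula, using orthogonality of $\{\cos kt, \sin kt\}$ on $\Sph^1$:
\[
\int_{\Sph^1} f^2 \, dt = \pi \sum_{k \ge 1} (a_k^2 + b_k^2), \qquad \int_{\Sph^1} (f')^2 \, dt = \pi \sum_{k \ge 1} k^2 (a_k^2 + b_k^2),
\]
where the second identity uses $f'(t) = \sum_{k \ge 1} k(-a_k \sin kt + b_k \cos kt)$. Subtracting gives
\[
\int_{\Sph^1} (f')^2 \, dt - \int_{\Sph^1} f^2 \, dt = \pi \sum_{k \ge 1} (k^2 - 1)(a_k^2 + b_k^2) \ge 0,
\]
since $k^2 - 1 \ge 0$ for all $k \ge 1$, which proves the inequality.

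For the equality case, the sum above vanishes if and only if $(k^2 - 1)(a_k^2 + b_k^2) = 0$ for every $k \ge 1$, i.e., $a_k = b_k = 0$ for all $k \ge 2$. Together with $a_0 = 0$, this forces $f(t) = a_1 \cos t + b_1 \sin t$, which is exactly \eqref{eqn:SinCos}. There is no real obstacle here beyond justifying the Fourier expansion, which is handled by $C^\infty$ smoothness; the heart of the argument is simply that $1$ is the smallest nonzero eigenvalue of $-\frac{d^2}{dt^2}$ on mean-zero functions.
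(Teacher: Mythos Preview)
Your proof is correct and is essentially the same as the paper's: the paper derives the inequality from the spectral gap of the Laplacian on $\Sph^1$ (Theorem~\ref{thm:LaplSm}), writing $\int (f')^2 = -\langle \Delta f, f\rangle \ge \lambda_1 \|f\|^2$ on the orthogonal complement of the constants, which is exactly your Fourier/Parseval computation carried out explicitly rather than invoked abstractly. Your closing remark that ``the heart of the argument is simply that $1$ is the smallest nonzero eigenvalue of $-\tfrac{d^2}{dt^2}$ on mean-zero functions'' is precisely the paper's formulation.
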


\begin{thm}[Spectrum of the Laplacian]
\label{thm:LaplSm}
The spectrum of the Laplace operator
\[
\Delta f = f'' \text{ for } f \in C^\infty(\Sph^1)
\]
is $\{-k^2 \mid k \in \Z\}$. The zero eigenspace consists of the constant functions; the eigenvalue $-1$ is double, and the associated eigenspace consists of the functions of the form \eqref{eqn:SinCos}.
\end{thm}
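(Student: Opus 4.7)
The plan is to directly solve the eigenvalue equation $\Delta f = \lambda f$, i.e.\ the ODE $f'' = \lambda f$, subject to the $2\pi$-periodicity required for $f \in C^\infty(\Sph^1)$, and then invoke completeness of the trigonometric system to rule out spectrum outside the list $\{-k^2 \mid k \in \Z\}$.

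For the eigenvalues, I would split into three cases by the sign of $\lambda$. If $\lambda > 0$, write $\lambda = \mu^2$ with $\mu>0$; the general solution is $f(t) = A e^{\mu t} + B e^{-\mu t}$, which is $2\pi$-periodic only if $A = B = 0$, so there are no positive eigenvalues. If $\lambda = 0$, every solution is affine, and periodicity forces it to be constant, giving the $1$-dimensional zero eigenspace as claimed. If $\lambda < 0$, write $\lambda = -\mu^2$ with $\mu > 0$; the general solution $f(t) = A \cos(\mu t) + B \sin(\mu t)$ is $2\pi$-periodic exactly when $\mu \in \Z_{\geq 1}$. Hence the negative eigenvalues are precisely $\lambda = -k^2$, $k \in \Z_{\geq 1}$, and each associated eigenspace is two-dimensional, spanned by $\cos(kt)$ and $\sin(kt)$. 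Specializing to $k=1$ yields the description of the $(-1)$-eigenspace in the statement.

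It remains to show that no other values of $\lambda$ lie in the spectrum; this is the main point that requires more than the ODE analysis. I would argue that if $f \in C^\infty(\Sph^1)$ satisfied $\Delta f = \lambda f$ with $\lambda \notin \{-k^2\}$, then integration by parts (using periodicity) gives $\langle f, \cos(kt)\rangle_{L^2}$ and $\langle f, \sin(kt)\rangle_{L^2}$ equal to $(\lambda + k^2)^{-1}$ times $\langle \Delta f, \cos(kt)\rangle - \langle f, \Delta \cos(kt)\rangle = 0$, so $f$ is $L^2$-orthogonal to every element of the trigonometric system. By completeness of the Fourier basis in $L^2(\Sph^1)$ this forces $f \equiv 0$. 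The hardest (most substantive) step is this completeness input, but it is entirely standard, available either via the Stone–Weierstrass theorem applied to trigonometric polynomials or via the classical $L^2$-convergence theorem for Fourier series.
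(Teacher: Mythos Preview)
The paper does not actually prove this theorem: it is stated as standard background in Section~\ref{sec:SmoothCase}, and the text that follows it merely explains how the Wirtinger inequality (Theorem~\ref{thm:WirtSm}) is equivalent to the spectral gap statement. So there is no proof in the paper to compare your proposal against.

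Your argument is the standard one and is correct for determining the eigenvalues and eigenspaces. One small comment on your final paragraph: the ODE analysis you carried out in the first part is already complete for the eigenvalue problem---once you know that every $2\pi$-periodic solution of $f'' = \lambda f$ forces $\lambda = -k^2$, there is nothing left to show about eigenfunctions, and your orthogonality computation is redundant with that. If your intent is to rule out non-eigenvalue spectrum (continuous or residual), then the relevant use of completeness is different: one diagonalizes $\Delta$ in the Fourier basis and observes that for $\lambda \notin \{-k^2\}$ the inverse $(\Delta - \lambda)^{-1}$ is bounded on $L^2(\Sph^1)$, acting as multiplication by $(-k^2-\lambda)^{-1}$ on the $k$-th mode. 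As written, your last paragraph re-proves the absence of eigenfunctions rather than addressing the resolvent set. In the informal sense in which the paper uses ``spectrum'' (a list of eigenvalues with their eigenspaces), however, the ODE case analysis alone suffices.
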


Theorem \ref{thm:WirtSm} is equivalent to the fact that the spectral gap of the Laplace operator equals $1$. Indeed, the zero average condition can be rewritten as
\[
\langle f, 1 \rangle_{L^2} = 0
\]
that is $f$ is $L^2$-orthogonal to the kernel of the Laplacian. This implies
\[
\int_{\Sph^1} (f')^2 \, dt = - \int_{\Sph^1} f'' \cdot f \, dt = - \langle \Delta f, f \rangle_{L^2} \ge \lambda_1 \|f\|^2
\]
which is the Wirtinger inequality since $\lambda_1 = 1$. Equality holds only for the eigenfunctions of $\lambda_1$.

\subsection{Wirtinger's inequality and the isoperimetric problem}
Blaschke used Wirtinger's inequality in 1916 to prove Minkowski's inequality in the plane, and by means of it the isoperimetric inequality \cite[\S 23]{Bla56}. For historic references, see \cite{Mit70}.

\begin{thm}[Isoperimetric problem in the plane]
\label{thm:IsoperSm}
Among all convex closed $C^2$-curves in the plane with the total length $2\pi$, the unit circle encloses the largest area.
\end{thm}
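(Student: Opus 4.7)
The plan is to follow Blaschke's classical route: parametrize the curve by its outward normal direction and apply Theorem \ref{thm:WirtSm} to the support function.

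First, for a convex closed $C^2$-curve $\gamma$ in the plane, let $h \colon \Sph^1 \to \R$ be its \emph{support function}, $h(\theta) = \max_{p \in \gamma} \langle p, (\cos\theta,\sin\theta)\rangle$. For a $C^2$ strictly convex curve, $h$ is $C^2$ and the radius of curvature at the point with outward normal direction $\theta$ equals $h(\theta) + h''(\theta) \ge 0$. The first step is to record the two standard integral formulas
\begin{equation*}
L(\gamma) = \int_0^{2\pi} h(\theta)\, d\theta, \qquad A(\gamma) = \frac12 \int_0^{2\pi} h(\theta)\bigl(h(\theta) + h''(\theta)\bigr)\, d\theta.
\end{equation*}
After integration by parts (using $2\pi$-periodicity of $h$ and $h'$), the area formula becomes
\begin{equation*}
A(\gamma) = \frac12 \int_0^{2\pi} \bigl(h^2 - (h')^2\bigr)\, d\theta.
\end{equation*}

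Second, I would split $h$ into its mean and fluctuation: write $h(\theta) = a_0 + f(\theta)$ with $\int_0^{2\pi} f\, d\theta = 0$. Imposing the normalization $L(\gamma) = 2\pi$ forces $a_0 = 1$. A direct substitution gives
\begin{equation*}
A(\gamma) = \pi + \frac12 \int_0^{2\pi} \bigl(f^2 - (f')^2\bigr)\, d\theta.
\end{equation*}
Since $f$ has zero average, Theorem \ref{thm:WirtSm} applies and yields $\int (f')^2 \ge \int f^2$, hence $A(\gamma) \le \pi$, which is exactly the desired isoperimetric inequality.

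Finally, the equality analysis is read off from the equality case of Wirtinger's lemma: $A(\gamma) = \pi$ forces $f(\theta) = a\cos\theta + b\sin\theta$, so $h(\theta) = 1 + a\cos\theta + b\sin\theta$. This is the support function of the unit disk translated by the vector $(a,b)$, so $\gamma$ is a unit circle (unique up to translation, as expected since translations preserve both $L$ and $A$).

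The only real subtlety is justifying the support-function setup: one must ensure that $h \in C^2(\Sph^1)$ when $\gamma$ is a $C^2$ convex curve, and that $h + h'' \ge 0$ pointwise so that the radius-of-curvature formula is valid everywhere on the circle of normal directions. Both follow from strict convexity (or can be obtained by a routine approximation argument if the curvature is allowed to vanish), and this is the step I would be most careful about; the rest of the argument is a clean application of the Wirtinger inequality.
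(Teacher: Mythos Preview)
Your proposal is correct and follows essentially the same Blaschke argument as the paper: express $L$ and $A$ via the support function, set $f = h - 1$, and apply Wirtinger's lemma to the zero-mean fluctuation $f$. You additionally spell out the equality case, which the paper leaves implicit; and your constant $\pi$ is the right one (the paper's displayed ``$2\pi$'' and inequality direction in the final line of that argument are typos).
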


Below is Blaschke-Wirtinger's argument, with a shortcut avoiding the more general Minkowski inequality.

Let $\Gamma$ be a convex closed curve in $\R^2$. Define the \emph{support function} of $\Gamma$ as
\[
h \colon \Sph^1 \to \R, \quad h(t) = \max\{\langle x, t \rangle \mid x \in \Gamma\}
\]
(Here $\Sph^1$ is viewed as the set of unit vectors in $\R^2$.)
If $\Gamma$ is strictly convex and of class $C^2$, then the Gauss map $\Gamma \to \Sph^1$ is a diffeomorphism. The corresponding parametrization $\gamma \colon \Sph^1 \to \Gamma$ of $\Gamma$ by its normal has the form
\[
\gamma(t) = h t + \nabla h
\]
The perimeter of $\Gamma$ and the area of the enclosed region can be computed as
\[
L(\Gamma) = \int_{\Sph^1} h \, dt, \quad
A(\Gamma) = \frac12 \int_{\Sph^1} h (h + h'')\, dt = \frac12 \int_{\Sph^1} (h^2 - (h')^2) \, dt
\]

Now assume $L(\Gamma) = 2\pi$ and put $f(t) = h(t) - 1$. We have
\[
\int_{\Sph^1} f(t) \, dt = L(\Gamma) - 2\pi = 0
\]
It follows that
\[
\int_{\Sph^1} h^2(t) \, dt = \int_{\Sph^1} (1 + f(t))^2 \, dt = 2\pi + \int_{\Sph^1} f^2(t) \, dt
\]
Hence
\[
A(\Gamma) = \frac12 \int_{\Sph^1} (h^2(t) - (h'(t))^2) \, dt = 2\pi + \frac12 \int_{\Sph^1} (f^2(t) - (f'(t))^2) \, dt \ge 2\pi
\]
by the Wirtinger inequality.

It is also possible to derive Wirtinger's inequality from the isoperimetric one: start with a twice differentiable function $f$ and choose $\epsilon > 0$ small enough so that $1 + \epsilon f$ is the support function of a convex curve.

See \cite{Scn14} for the general theory of convex bodies, and \cite{Trei09} for a nice survey on the isoperimetry and Minkowski theory.

\section{Wirtinger, Laplace, and isoperimetry in the discrete case}
Since we will use geometric objects in our proof of Theorem \ref{thm:LaplDiscr}, let us start with geometry.
\subsection{The geometric setup}
\label{sec:LHuil}

Take $n$ infinite angular regions $A_1, \ldots, A_n$ of angles $\alpha_1, \ldots, \alpha_n \in (0,\pi)$ respectively and glue them along their sides in this cyclic order. This results in a cone $C_\omega$ with $\omega = \sum_{i=1}^n \alpha_i$. Let $R_i$ be the ray separating $A_i$ from $A_{i+1}$, and let $\nu_i$ be the unit vector along $R_i$ pointing away from the apex. See Figure \ref{fig:ConeAngles}, left.

\begin{figure}[ht]
\centering
\begin{picture}(0,0)%
\includegraphics{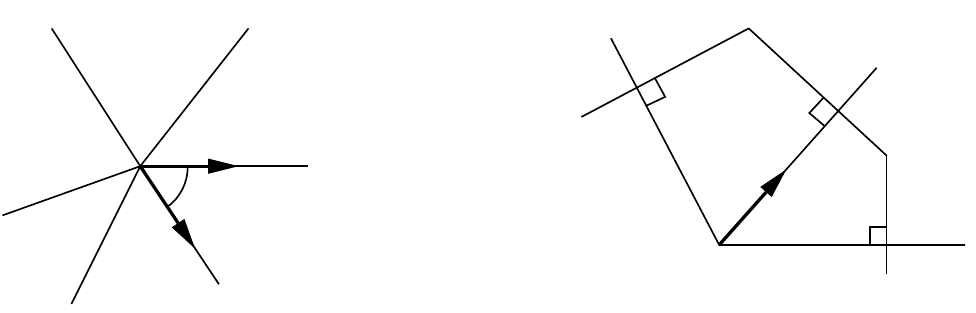}%
\end{picture}%
\setlength{\unitlength}{4144sp}%
\begingroup\makeatletter\ifx\SetFigFont\undefined%
\gdef\SetFigFont#1#2#3#4#5{%
  \reset@font\fontsize{#1}{#2pt}%
  \fontfamily{#3}\fontseries{#4}\fontshape{#5}%
  \selectfont}%
\fi\endgroup%
\begin{picture}(4450,1421)(259,-955)
\put(1276,-895){\makebox(0,0)[lb]{\smash{{\SetFigFont{9}{10.8}{\rmdefault}{\mddefault}{\updefault}{\color[rgb]{0,0,0}$R_n$}%
}}}}
\put(1410,310){\makebox(0,0)[lb]{\smash{{\SetFigFont{9}{10.8}{\rmdefault}{\mddefault}{\updefault}{\color[rgb]{0,0,0}$R_2$}%
}}}}
\put(1110,-470){\makebox(0,0)[lb]{\smash{{\SetFigFont{9}{10.8}{\rmdefault}{\mddefault}{\updefault}{\color[rgb]{0,0,0}$\alpha_1$}%
}}}}
\put(1712,-343){\makebox(0,0)[lb]{\smash{{\SetFigFont{9}{10.8}{\rmdefault}{\mddefault}{\updefault}{\color[rgb]{0,0,0}$R_1$}%
}}}}
\put(1014,-726){\makebox(0,0)[lb]{\smash{{\SetFigFont{9}{10.8}{\rmdefault}{\mddefault}{\updefault}{\color[rgb]{0,0,0}$\nu_n$}%
}}}}
\put(1289,-252){\makebox(0,0)[lb]{\smash{{\SetFigFont{9}{10.8}{\rmdefault}{\mddefault}{\updefault}{\color[rgb]{0,0,0}$\nu_1$}%
}}}}
\put(1454,-614){\makebox(0,0)[lb]{\smash{{\SetFigFont{9}{10.8}{\rmdefault}{\mddefault}{\updefault}{\color[rgb]{0,0,0}$A_1$}%
}}}}
\put(1512, 35){\makebox(0,0)[lb]{\smash{{\SetFigFont{9}{10.8}{\rmdefault}{\mddefault}{\updefault}{\color[rgb]{0,0,0}$A_2$}%
}}}}
\put(4694,-702){\makebox(0,0)[lb]{\smash{{\SetFigFont{9}{10.8}{\rmdefault}{\mddefault}{\updefault}{\color[rgb]{0,0,0}$R_{i-1}$}%
}}}}
\put(4292,138){\makebox(0,0)[lb]{\smash{{\SetFigFont{9}{10.8}{\rmdefault}{\mddefault}{\updefault}{\color[rgb]{0,0,0}$R_i$}%
}}}}
\put(4221,-880){\makebox(0,0)[lb]{\smash{{\SetFigFont{9}{10.8}{\rmdefault}{\mddefault}{\updefault}{\color[rgb]{0,0,0}$L_{i-1}$}%
}}}}
\put(2691,-205){\makebox(0,0)[lb]{\smash{{\SetFigFont{9}{10.8}{\rmdefault}{\mddefault}{\updefault}{\color[rgb]{0,0,0}$L_{i+1}$}%
}}}}
\put(3878,-411){\makebox(0,0)[lb]{\smash{{\SetFigFont{9}{10.8}{\rmdefault}{\mddefault}{\updefault}{\color[rgb]{0,0,0}$\nu_i$}%
}}}}
\put(3850,-619){\makebox(0,0)[lb]{\smash{{\SetFigFont{9}{10.8}{\rmdefault}{\mddefault}{\updefault}{\color[rgb]{0,0,0}$x_{i-1}$}%
}}}}
\put(3229,-347){\makebox(0,0)[lb]{\smash{{\SetFigFont{9}{10.8}{\rmdefault}{\mddefault}{\updefault}{\color[rgb]{0,0,0}$x_i$}%
}}}}
\put(2840,319){\makebox(0,0)[lb]{\smash{{\SetFigFont{9}{10.8}{\rmdefault}{\mddefault}{\updefault}{\color[rgb]{0,0,0}$R_{i+1}$}%
}}}}
\put(3903,143){\makebox(0,0)[lb]{\smash{{\SetFigFont{9}{10.8}{\rmdefault}{\mddefault}{\updefault}{\color[rgb]{0,0,0}$\ell_i$}%
}}}}
\end{picture}%
\caption{The geometric setup for the isoperimetric problem on the cone.}
\label{fig:ConeAngles}
\end{figure}

Develop the angle $A_i \cup A_{i+1}$ into the plane, choose $x_{i-1}, x_i, x_{i+1} \in \R$ and draw the lines
\[
L_j = \{p \in \R^2 \mid \langle p, \nu_i \rangle = x_j\}, j = i-1, i, i+1
\]
Orient the line $L_i$ as pointing from $A_i$ into $A_{i+1}$ and denote by $\ell_i$ the signed length of the segment with the endpoints $L_i \cap L_{i-1}$ and $L_i \cap L_{i+1}$. A simple computation yields
\begin{equation}
\label{eqn:EllI}
\ell_i = \frac{x_{i-1} - x_i\cos\alpha_i}{\sin\alpha_i} + \frac{x_{i+1} - x_i\cos\alpha_{i+1}}{\sin\alpha_{i+1}}
\end{equation}
This defines a linear operator $\ell \colon \R^n \to \R^n$. It turns out that $\ell(x) = Mx$, where $M$ is the matrix from Theorem \ref{thm:LaplDiscr}.

\subsection{Proof of the signature theorem}
\label{sec:SignProof}
\begin{lem}
\label{lem:Rank}
The corank of the matrix $M$ from Theorem \ref{thm:LaplDiscr} is as follows.
\[
\dim \ker M =
\begin{cases}
0, &\text{if } \sum_{i=1}^n \alpha_i \equiv 0 (\mod 2\pi)\\
2, &\text{if } \sum_{i=1}^n \alpha_i \not\equiv 0 (\mod 2\pi)
\end{cases}
\]
\end{lem}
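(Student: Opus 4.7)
The plan is to exploit the geometric interpretation from Section~\ref{sec:LHuil}: since $Mx = \ell(x)$, a vector $x \in \R^n$ lies in $\ker M$ if and only if the three-term cyclic recurrence $\ell_i = 0$ in \eqref{eqn:EllI} is satisfied for every $i$, with indices read cyclically ($x_0 = x_n$, $x_{n+1} = x_1$). Thus the problem reduces to counting $n$-cyclic solutions of a second-order linear recurrence with angle-dependent coefficients.

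First I would find the general (non-cyclic) solution by ansatz. Setting $\phi_k := \sum_{j=1}^k \alpha_j$, the angle-addition formulas give
\[
\frac{\cos\phi_{i-1} - \cos\phi_i\cos\alpha_i}{\sin\alpha_i} = \sin\phi_i = -\frac{\cos\phi_{i+1} - \cos\phi_i\cos\alpha_{i+1}}{\sin\alpha_{i+1}},
\]
so $x_k = \cos\phi_k$ satisfies $\ell_i = 0$; an analogous identity handles $x_k = \sin\phi_k$. Since the recurrence has order two, the pair $\{\cos\phi_k,\sin\phi_k\}$ spans the full two-dimensional solution space of $\ell \equiv 0$, and every solution therefore has the form $x_k = A\cos\phi_k + B\sin\phi_k$ for some $(A,B) \in \R^2$, in agreement with \eqref{eqn:SinCosDiscr}.

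Next I would impose the cyclic closing condition $x_{k+n} = x_k$. Since $\phi_{k+n} = \phi_k + \phi_n$ once the coefficients are extended cyclically, the angle-addition formulas collapse the closing conditions coming from two consecutive indices into the $2 \times 2$ linear system
\[
(I - R_{\phi_n})\binom{A}{B} = 0,
\]
where $R_{\phi_n}$ denotes planar rotation by $\phi_n = \sum_{i=1}^n \alpha_i$. A direct determinant computation yields $\det(I - R_{\phi_n}) = 2(1-\cos\phi_n)$, which vanishes if and only if $\phi_n \equiv 0 \pmod{2\pi}$. The two cases of the lemma come from this dichotomy: in one case the system has only the trivial solution $A = B = 0$, giving a trivial kernel, while in the other case the full sin/cos ansatz lies in $\ker M$, contributing a two-dimensional kernel.

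The main technical point requiring care is bookkeeping: one must check that the two closing conditions $x_n = x_0$ and $x_{n+1} = x_1$ really do suffice (the others being automatic consequences via the recurrence) and that they collapse to precisely $I - R_{\phi_n}$ rather than a more complicated $2 \times 2$ matrix. Once that is verified, the rest of the proof is a one-line determinant computation, and the statement follows.
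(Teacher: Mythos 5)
Your argument is correct and is essentially the paper's own proof rewritten in developed coordinates: the paper identifies $\ker M$ with parallel $1$-forms on $C_\omega\setminus\{0\}$ and observes that a nonzero such form exists iff the holonomy of the cone --- rotation by $\phi_n=\sum_{i=1}^n\alpha_i$ --- is trivial, which is exactly your closing condition $(I-R_{\phi_n})\binom{A}{B}=0$ with $\det(I-R_{\phi_n})=2(1-\cos\phi_n)$; your explicit two-dimensional solution space $x_k=A\cos\phi_k+B\sin\phi_k$ of the recurrence $\ell_i=0$ is the statement that every parallel form on a developed angle is $\langle X,\cdot\rangle$ for a constant $X\in\R^2$, and your bookkeeping point (that two consecutive closing conditions propagate through the second-order recurrence) is sound because the coefficient $1/\sin\alpha_{i+1}$ of $x_{i+1}$ never vanishes. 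One remark: the two cases in the printed statement of the lemma are evidently transposed (compare the coranks asserted in Theorem \ref{thm:LaplDiscr}); your computation yields the intended assignment, namely $\dim\ker M=2$ precisely when $\sum_{i=1}^n\alpha_i\equiv 0\ (\mod 2\pi)$ and $\dim\ker M=0$ otherwise.
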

\begin{proof}
We will show that the elements of $\ker M$ are in a one-to-one correspondence with parallel $1$-forms on $C_{\omega} \setminus \{0\}$. If $\omega \not\equiv 0(\mod 2\pi)$, then every parallel form vanishes. If $\omega \equiv 0(\mod 2\pi)$, then all of them are pullbacks of parallel forms on $\R^2$ via the developing map, and thus $\ker M$ has dimension~$2$. This will imply the statement of the lemma.

With any $x = (x_1, \ldots, x_n) \in \R^n$ associate a family of $1$-forms $\xi_i \in \Omega^1(A_i)$ where each $\xi_i$ is parallel on $A_i$ and is determined by
\[
\xi_i(\nu_{i-1}) = x_{i-1}, \quad \xi_i(\nu_i) = x_i
\]
Here $\nu_i$ denotes, by abuse of notation, the extension of the vector $\nu_i$ to a parallel vector field on $A_i \cup A_{i+1}$.
We claim that $x \in \ker M$ if and only if the form $\xi_i$ is parallel to $\xi_{i+1}$ for all $i$.

\begin{figure}[ht]
\centering
\begin{picture}(0,0)%
\includegraphics{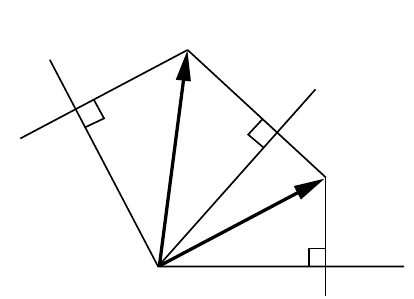}%
\end{picture}%
\setlength{\unitlength}{4144sp}%
\begingroup\makeatletter\ifx\SetFigFont\undefined%
\gdef\SetFigFont#1#2#3#4#5{%
  \reset@font\fontsize{#1}{#2pt}%
  \fontfamily{#3}\fontseries{#4}\fontshape{#5}%
  \selectfont}%
\fi\endgroup%
\begin{picture}(1884,1354)(2825,-793)
\put(4292,138){\makebox(0,0)[lb]{\smash{{\SetFigFont{9}{10.8}{\rmdefault}{\mddefault}{\updefault}{\color[rgb]{0,0,0}$R_i$}%
}}}}
\put(3229,-347){\makebox(0,0)[lb]{\smash{{\SetFigFont{9}{10.8}{\rmdefault}{\mddefault}{\updefault}{\color[rgb]{0,0,0}$x_i$}%
}}}}
\put(2840,319){\makebox(0,0)[lb]{\smash{{\SetFigFont{9}{10.8}{\rmdefault}{\mddefault}{\updefault}{\color[rgb]{0,0,0}$R_{i+1}$}%
}}}}
\put(3903,143){\makebox(0,0)[lb]{\smash{{\SetFigFont{9}{10.8}{\rmdefault}{\mddefault}{\updefault}{\color[rgb]{0,0,0}$\ell_i$}%
}}}}
\put(4694,-702){\makebox(0,0)[lb]{\smash{{\SetFigFont{9}{10.8}{\rmdefault}{\mddefault}{\updefault}{\color[rgb]{0,0,0}$R_{i-1}$}%
}}}}
\put(3850,-619){\makebox(0,0)[lb]{\smash{{\SetFigFont{9}{10.8}{\rmdefault}{\mddefault}{\updefault}{\color[rgb]{0,0,0}$x_{i-1}$}%
}}}}
\put(4375,-272){\makebox(0,0)[lb]{\smash{{\SetFigFont{9}{10.8}{\rmdefault}{\mddefault}{\updefault}{\color[rgb]{0,0,0}$X_i$}%
}}}}
\put(3635,414){\makebox(0,0)[lb]{\smash{{\SetFigFont{9}{10.8}{\rmdefault}{\mddefault}{\updefault}{\color[rgb]{0,0,0}$X_{i+1}$}%
}}}}
\end{picture}%
\caption{Vectors $X_i$ and $X_{i+1}$ dual to the forms $\xi_i$ and $\xi_{i+1}$.}
\label{fig:CompareForms}
\end{figure}

To compare the forms $\xi_i$ and $\xi_{i+1}$, develop the angle $A_i \cup A_{i+1}$ on the plane. We have
\[
\xi_i(v) = \langle X_i, v \rangle,
\]
where $X_i \in \R^2$ is the vector whose projections to the rays $R_{i-1}$ and $R_i$ have lengths $x_{i-1}$ and $x_i$, respectively, see Figure \ref{fig:CompareForms}. Thus $\xi_i$ is parallel to $\xi_{i+1}$ if and only if $X_i = X_{i+1}$. On the other hand, by Section \ref{sec:LHuil} we have
\[
\|X_{i+1} - X_i\| = |\ell_i(x)|
\]
Hence $1$-forms $\xi_i$ define a parallel form on $C_{\omega} \setminus \{0\}$ if and only if $Mx = 0$.
\end{proof}

\begin{proof}[Proof of Theorem \ref{thm:LaplDiscr}]
Put $\omega = \sum_{i=1}^n \alpha_i$ and define
\[
\alpha_i(t) = (1-t)\alpha_i + t \frac{\omega}n, \quad t \in [0,1]
\]
For all $t$ we have $\alpha_i(t) \in (0,\pi)$ and $\sum_{i=1}^n \alpha_i(t) = \omega$. Hence, by Lemma \ref{lem:Rank} the matrix $M_t$ constructed from the angles $\alpha_i(t)$ has a constant rank for all~$t$. Therefore its signature does not depend on $t$. It remains to determine the signature of the matrix $M_1$. After scaling by a positive factor $M_1$ becomes
\[
\begin{pmatrix}
-2\cos\frac{\omega}n & 1 & \ldots & 1\\
1 & -2\cos\frac{\omega}n & \ddots & \vdots \\
\vdots & \ddots & \ddots & 1 \\
1 & \ldots & 1 & -2\cos\frac{\omega}n
\end{pmatrix}
\]
The eigenvalues of this matrix are
\[
\left\{\left.2\cos \frac{2\pi k}{n} - 2 \cos \frac{\omega}n \right| k = 1, 2, \ldots, n\right\}
\]
If $\omega = 2\pi m$, then exactly two of these eigenvalues are zero (the ones with $k = m$ and $k = n-m$). For $\omega > 2\pi m$ there are exactly $2m + 1$ positive eigenvalues. The theorem is proved.
\end{proof}

\subsection{Proof of the general discrete Wirtinger inequality}
\label{sec:GenWirtProof}
Let us show that Theorem \ref{thm:LaplDiscr} implies Theorem \ref{thm:WirtDiscr}.

The key point is that inequality \eqref{eqn:DiscrWirt} is equivalent to $\langle Mx, x \rangle \le 0$ and that
\[
\sum_{i=1}^n \left(\tan\frac{\alpha_i}2 + \tan\frac{\alpha_{i+1}}2\right) x_i = \langle Mx, {\bf 1} \rangle
\]
Assume first $\sum_{i=1}^n \alpha_i \le 2\pi$. By Theorem \ref{thm:LaplDiscr}, the quadratic form $M$ has positive index $1$ and takes a positive value on the vector ${\bf 1}$. Hence it is negative semidefinite on the orthogonal complement to ${\bf 1}$:
\[
\langle Mx, {\bf 1} \rangle = 1 \Rightarrow \langle Mx, x \rangle \le 0
\]
This proves the first statement of Theorem \ref{thm:WirtDiscr}.

If $\sum_{i=1}^n \alpha_i < 2\pi$, then $M$ is negative definite on the complement to ${\bf 1}$, hence equality holds in \eqref{eqn:DiscrWirt} only for $x = 0$. If $\sum_{i=1}^n \alpha_i = 2\pi$, then equality holds only if $Mx = 0$ (all isotropic vectors of a semidefinite quadratic form lie in its kernel). We have $Mx = 0$ if and only if all vectors $X_i$ on Figure \ref{fig:CompareForms} are equal, that is iff $x_i = \langle X, \nu_i \rangle$ for some $X \in \R^2$.
This proves the second statement of Theorem \ref{thm:WirtDiscr}.

Finally, under the assumption $\sum_{i=1}^n \alpha_i > 2\pi$ the quadratic form $\langle Mx, x \rangle$ is indefinite on the orthogonal complement to ${\bf 1}$, hence the inequality \eqref{eqn:DiscrWirt} fails for some $x$.


\subsection{Proof of the isoperimetric inequality}
First we have to define a convex polygon on $C_\omega$ with given side directions. Let $C_\omega$ be assembled from the angular regions $A_i$ as in Section \ref{sec:LHuil} and let $x_1, \ldots, x_n > 0$. Then we can draw the lines $L_i$ as described in Section \ref{sec:LHuil} directly on $C_\omega$. If $\omega < 2\pi$, then $L_{i-1}$ and $L_i$ may intersect in more than one point, but their lifts to the universal branched cover have only one point in common. Denote the projection of this point to $C_\omega$ by $p_i$. We obtain a closed polygonal line $p_1 \ldots p_n$ with sides lying on $L_i$. If $\ell_i(x) > 0$, then we call this line a \emph{convex polygon} on $C_\omega$ with the exterior normals $\nu_1, \ldots, \nu_n$ and \emph{support numbers} $x_1, \ldots, x_n$.

The polygon with the support numbers ${\bf 1}$ is circumscribed about the unit circle centered at the apex.

\begin{proof}[Proof of Theorem \ref{thm:IsoperDiscr}]
The perimeter and the area of a convex polygon with the support numbers $h$ are computed as follows.
\begin{gather*}
L(h) = \sum_{i=1}^n \ell_i(h) = \langle Mh, 1 \rangle\\
A(h) = \frac12 \sum_{i=1}^n h_i \ell_i(h) = \frac12 \langle Mh, h \rangle
\end{gather*}
It suffices to prove the theorem in the special case of a polygon circumscribed about the unit circle, that is we need to show
\[
L(h) = L({\bf 1}) \Rightarrow A(h) \le A({\bf 1})
\]
Put $f = h - {\bf 1} \in \R^n$. Due to the assumption $L(h) = L({\bf 1})$ we have
\[
\langle Mf, {\bf 1} \rangle = 0
\]
Hence by Theorem \ref{thm:LaplDiscr} we have $\langle Mf, f \rangle \le 0$, so that
\begin{multline*}
A(h) = \frac12 \langle M({\bf 1} + f), {\bf 1} + f \rangle = \frac12 \langle M {\bf 1}, {\bf 1} \rangle + \langle Mf, {\bf 1} \rangle + \frac12 \langle Mf, f \rangle\\
= A({\bf 1}) + \frac12 \langle Mf, f \rangle \le A({\bf 1})
\end{multline*}
The statements on the uniqueness and optimality follow from the facts about the signature of $M$ and the values of $M$ on the vectors \eqref{eqn:SinCosDiscr}.
\end{proof}

\section{The Wirtinger inequality with boundary conditions}
\label{sec:WirtBdry}
For functions vanishing at the endpoints of an interval we have the following.
\begin{thm}
Let $f \colon [0,\pi] \to \R$ be a $C^\infty$-function such that $f(0) = f(\pi) = 0$.
Then
\[
\int_0^\pi f^2(t) \, dt \le \int_0^\pi (f')^2(t) \, dt
\]
Equality holds if and only if $x = a \sin t$.
\end{thm}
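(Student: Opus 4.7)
The plan is to mirror the approach of Section~\ref{sec:SmoothCase} (proof of Theorem~\ref{thm:WirtSm}), reducing the inequality to the fact that the smallest Dirichlet eigenvalue of $-d^2/dt^2$ on $[0,\pi]$ equals $1$.

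First I would pin down the Dirichlet spectrum explicitly: solving $-\phi'' = \lambda \phi$ on $[0,\pi]$ with $\phi(0) = \phi(\pi) = 0$ yields the eigenpairs $\phi_k(t) = \sin(kt)$, $\lambda_k = k^2$, $k = 1, 2, \ldots$. These form a complete $L^2$-orthogonal system: any smooth $f$ with $f(0) = f(\pi) = 0$ extends to an odd, $2\pi$-periodic function that is at least $C^1$ across $0$ and $\pi$, so its standard Fourier series contains only sine terms and converges uniformly.

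Next, expand $f(t) = \sum_{k \ge 1} c_k \sin(kt)$. Parseval's identity gives
$$\int_0^\pi f^2(t)\, dt \;=\; \frac{\pi}{2} \sum_{k \ge 1} c_k^2,$$
while integration by parts (the boundary contributions vanish by the Dirichlet condition), combined with termwise differentiation, yields
$$\int_0^\pi (f'(t))^2\, dt \;=\; -\int_0^\pi f(t) f''(t)\, dt \;=\; \frac{\pi}{2} \sum_{k \ge 1} k^2 c_k^2.$$
Since $k^2 \ge 1$, with equality iff $k = 1$, the inequality follows at once; equality forces $c_k = 0$ for all $k \ge 2$, so $f(t) = a \sin t$, recovering the characterization of the equality case.

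The only step requiring real care---and the main potential obstacle---is justifying the termwise differentiation so that the spectral identity for $\int (f')^2$ holds; it is enough that the odd $2\pi$-periodic extension of $f$ is of class $C^1$, which is guaranteed by $f(0) = f(\pi) = 0$. If one prefers to bypass spectral theory entirely, an elementary alternative is to set $g(t) = f(t)/\sin t$ (continuous on $[0,\pi]$ since $f$ vanishes at both endpoints) and use the identity
$$(f')^2 - f^2 \;=\; (g')^2 \sin^2 t + \frac{d}{dt}\bigl(g^2 \sin t \cos t\bigr).$$
Integrating over $[0,\pi]$, the boundary term $\bigl[g^2 \sin t \cos t\bigr]_0^\pi$ vanishes, and one obtains $\int_0^\pi ((f')^2 - f^2)\, dt = \int_0^\pi (g')^2 \sin^2 t\, dt \ge 0$, with equality iff $g$ is constant, i.e., $f = a \sin t$.
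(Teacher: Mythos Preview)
Your proposal is correct. The paper does not actually supply a proof of this statement; it simply records the theorem and remarks that ``there is an obvious relation to the Dirichlet spectrum of the Laplacian,'' exactly in parallel with how Theorem~\ref{thm:WirtSm} was tied to Theorem~\ref{thm:LaplSm} in Section~\ref{sec:SmoothCase}. Your first (spectral) argument is precisely what this remark is pointing to, so in that sense you have filled in the details the paper omits rather than deviated from it. The second argument via $g=f/\sin t$ is a genuine alternative not present in the paper; it trades the appeal to completeness of the sine system for a pointwise identity plus an integration by parts, and has the advantage that it requires no Fourier analysis and makes the equality case transparent, at the modest cost of checking that $g$ and $g'$ stay bounded at the endpoints (which the $C^\infty$ hypothesis on $f$ guarantees).
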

There is an obvious relation to the Dirichlet spectrum of the Laplacian.

In a way similar to this and to the argument in Section \ref{sec:GenWirtProof}, Theorem \ref{thm:WirtDiscrBdry} is implied by the following.

\begin{thm}
\label{thm:LaplDiscrBdry}
Let $\alpha_1, \ldots, \alpha_{n+1} \in (0, \pi)$. Then the tridiagonal $n \times n$ matrix
\begin{equation*}
M =
\begin{pmatrix}
-(\cot \alpha_1 + \cot \alpha_2) & \frac1{\sin\alpha_2} & \ldots & 0\\
\frac1{\sin\alpha_2} & -(\cot \alpha_2 + \cot \alpha_3) & \ddots & \vdots \\
\vdots & \ddots & \ddots & \frac1{\sin\alpha_n} \\
0 & \ldots & \frac1{\sin\alpha_n} & -(\cot \alpha_n + \cot \alpha_{n+1})
\end{pmatrix}
\end{equation*}
has the signature
\begin{align*}
(m-1, 1, n-m), &\text{ if } \sum_{i=1}^{n+1} \alpha_i = m\pi, \, m \ge 1\\
(m, 0, n-m), &\text{ if } m\pi < \sum_{i=1}^{n+1} \alpha_i < (m+1)\pi, \, m \ge 0
\end{align*}
Here $(p,q,r)$ means $p$ positive, $q$ zero, and $r$ negative eigenvalues.

If $\sum_{i=1}^{n+1} \alpha_i = m\pi$, then $\ker M$ consists of the vectors of the form
\[
x_k = a \sin \sum_{i=1}^k \alpha_i
\]
\end{thm}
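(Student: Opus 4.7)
The plan is to mirror the proof of Theorem \ref{thm:LaplDiscr}: reduce the signature computation to a corank computation (by a homotopy that preserves the total angle $\omega = \sum_{i=1}^{n+1}\alpha_i$), then evaluate the corank geometrically via parallel $1$-forms, and finally read off the signature in the equal-angle case from an explicit eigenvalue formula.

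Step 1, corank computation (analogue of Lemma \ref{lem:Rank}). Set up $n+1$ angular regions $A_1,\ldots,A_{n+1}$ of angles $\alpha_1,\ldots,\alpha_{n+1}$ glued along rays $R_0, R_1,\ldots, R_{n+1}$ to form a planar angular sector of opening $\omega$. With any $x_1,\ldots,x_n$ (extended by $x_0=x_{n+1}=0$) associate parallel $1$-forms $\xi_i\in\Omega^1(A_i)$ determined by $\xi_i(\nu_{i-1})=x_{i-1}$, $\xi_i(\nu_i)=x_i$, and let $X_i\in\R^2$ be their dual vectors after development. Exactly as before, $(Mx)_i = \ell_i(x)$ measures $\|X_{i+1}-X_i\|$, so $Mx=0$ forces $X_1=\cdots=X_{n+1}=:X$. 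The boundary conditions $x_0=0$ and $x_{n+1}=0$ translate to $X\perp R_0$ and $X\perp R_{n+1}$. Such a nonzero $X$ exists iff $R_0\parallel R_{n+1}$ iff $\omega\equiv 0\pmod{\pi}$, and in that case the space of solutions is $1$-dimensional. This proves
\[
\dim\ker M = \begin{cases}1, & \sum\alpha_i\equiv 0\pmod{\pi},\\ 0, & \text{otherwise.}\end{cases}
\]

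Step 2, homotopy. Set $\alpha_i(t)=(1-t)\alpha_i + t\,\omega/(n+1)$. Because each $\alpha_i<\pi$ we have $\omega/(n+1)<\pi$, so $\alpha_i(t)\in(0,\pi)$ for all $t$, and $\sum_i\alpha_i(t)=\omega$ is constant. By Step 1 the corresponding matrix $M_t$ has constant corank, hence constant signature. Thus it suffices to determine the signature when all angles equal $\alpha:=\omega/(n+1)$.

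Step 3, equal-angle case. After multiplication by $\sin\alpha>0$ the matrix becomes the tridiagonal $n\times n$ matrix with diagonal entries $-2\cos\alpha$ and off-diagonal entries $1$. Its eigenvalues are classically
\[
\lambda_k = 2\cos\frac{k\pi}{n+1}-2\cos\alpha = 2\cos\frac{k\pi}{n+1}-2\cos\frac{\omega}{n+1}, \qquad k=1,\ldots,n.
\]
Since $\omega/(n+1)\in(0,\pi)$ and cosine is strictly decreasing there, $\lambda_k>0 \iff k\pi<\omega$. If $\omega=m\pi$ then $\lambda_m=0$ is the unique zero (the putative second solution $k=2(n+1)-m$ exceeds $n$), giving signature $(m-1,1,n-m)$; if $m\pi<\omega<(m+1)\pi$ there are $m$ positive and $n-m$ negative eigenvalues. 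This matches the claim.

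Step 4, explicit kernel. When $\omega=m\pi$ the common vector $X$ from Step 1 is (up to scale) the unit perpendicular to $R_0$; its projection onto $R_k$, which makes angle $\beta_k:=\sum_{i=1}^k\alpha_i$ with $R_0$, has signed length $a\sin\beta_k$. Since $\dim\ker M=1$, this exhibits $\ker M = \{(a\sin\beta_1,\ldots,a\sin\beta_n)\mid a\in\R\}$; note $a\sin\beta_{n+1}=a\sin(m\pi)=0$, consistent with $x_{n+1}=0$.

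The main obstacle, as in the periodic case, is the geometric interpretation in Step 1: one must recognize that the Dirichlet boundary conditions $x_0=x_{n+1}=0$ are precisely the conditions that the dual vector $X$ be perpendicular to the two boundary rays of the developed fan, so that the existence of a nonzero common $X$ is governed by $\omega\bmod\pi$ rather than $\omega\bmod 2\pi$. Once this is in place, the homotopy and the standard tridiagonal eigenvalue formula finish the proof.
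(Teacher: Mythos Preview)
Your proposal is correct and follows essentially the same route as the paper: compute the corank via parallel $1$-forms on the developed angular sector (with the Dirichlet condition $x_0=x_{n+1}=0$ becoming $\xi(\nu_0)=\xi(\nu_{n+1})=0$, i.e.\ $X\perp R_0,\,R_{n+1}$), homotope to equal angles while keeping $\omega$ fixed, and read off the signature from the standard tridiagonal spectrum $2\cos\frac{k\pi}{n+1}-2\cos\frac{\omega}{n+1}$. Your Step~4 makes the kernel description slightly more explicit than the paper does, but the argument is otherwise the same.
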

\begin{proof}
Similarly to Section \ref{sec:SignProof}, consider the angular region $A_\omega$ glued out of $n$ regions $A_i$ of the angles $\alpha_i$.

First show that $\dim \ker M = 1$ if $\sum_{i=1}^{n+1} \alpha_i = m\pi$ and $\dim \ker M = 0$ otherwise. For this, associate as in Section \ref{sec:SignProof} with every element of the kernel a parallel $1$-form $\xi$ on $A_\omega$ such that $\xi(\nu_0) = \xi(\nu_{n+1}) = 0$. Since the angle between $\nu_0$ and $\nu_{n+1}$ is $\omega$, such a form exists only if $\omega = m\pi$.

Then deform the angles $\alpha_i$, while keeping their sum fixed, to $\alpha_i = \frac{\omega}{n+1}$ and use the fact that the matrix
\[
\begin{pmatrix}
-2\cos\frac{\omega}{n+1} & 1 & \ldots & 0\\
1 & -2\cos\frac{\omega}{n+1} & \ddots & \vdots \\
\vdots & \ddots & \ddots & 1 \\
0 & \ldots & 1 & -2\cos\frac{\omega}{n+1}
\end{pmatrix}
\]
has the spectrum
\[
\left\{\left.2\cos \frac{\pi k}{n+1} - 2 \cos \frac{\omega}{n+1} \right| k = 1, 2, \ldots, n\right\}
\]
It follows that the signature of $M$ is as stated in the theorem.
\end{proof}


\section{Higher dimensions}
\label{sec:HighDim}
\subsection{The quermassintegrals}
\label{sec:Quermass}
\begin{dfn}
The $i$-th \emph{quermassintegral} $W_i(K)$ of a convex body $K \subset \R^n$ is the coefficient in the expansion
\[
\vol_n(K_t) = \sum_{i=0}^n t^i \binom{n}{i} W_i(K)
\]
where $K_t = \{x \in \R^n \mid \dist(x,K) \le t\}$ is the $t$-neighborhood of $K$.
\end{dfn}
In particular,
\[
W_0(K) = \vol_n(K), \quad W_1(K) = \frac1n \vol_{n-1}{\partial K}, \quad W_n(K) = \vol_n(B^n)
\]
Also, $W_i$ is proportional to the mean volume of the projections of $K$ to $(n-i)$-dimensional subspaces, as well as to the integral of the $(i-1)$-st homogeneous polynomial in the principal curvatures (provided $\partial K$ is smooth):
\[
W_i(K) = c_{n,i} \int_{{\mathrm Gr}(n,n-i)} \vol_{n-i}({\mathrm pr}_\xi(K)) \, d\xi =
c'_{n,i} \int_{\partial K} \sigma_{i-1} \, dx
\]
We will need the following expressions for $W_{n-1}$ and $W_{n-2}$ in terms of the support function $h \colon \Sph^{n-1} \to \R$:
\[
W_{n-1}(K) = \frac1n \int_{\Sph^{n-1}} h \, d\nu, \quad W_{n-2}(K) = \frac1n \int_{\Sph^{n-1}} h((n-1)h + \Delta h) \, d\nu
\]
See e.g. \cite{Scn14} for a proof.

\subsection{The smooth case}
The following three theorems generalize those from Section \ref{sec:SmoothCase}. Again, Theorem \ref{thm:LaplSmHigh} implies the other two, where for Theorem \ref{thm:AvWidth} one needs the formulas for $W_{n-1}$ and $W_{n-2}$ from the previous section.
\begin{thm}
\label{thm:WirtSmHigh}
Let $f \colon \Sph^{n-1} \to \R$ be a $C^1$-function with the zero average:
\[
\int_{\Sph^{n-1}} f(x) \, dx = 0
\]
Then
\[
\int_{\Sph^{n-1}} f^2(x) \, dx \le \frac1{n-1} \int_{\Sph^{n-1}} \|\nabla f\|^2 \, dx
\]
Equality holds if and only if $f$ is a spherical harmonic of order $1$, that is a restriction to $\Sph^{n-1}$ of a linear function on $\R^n$.
\end{thm}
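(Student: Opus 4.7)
The plan is to mirror the one-dimensional argument at the end of Section \ref{sec:SmoothCase}: deduce the inequality from the spectral gap of the Laplace--Beltrami operator $\Delta_{\Sph^{n-1}}$. The key input is the classical decomposition
\[
L^2(\Sph^{n-1}) = \bigoplus_{k \ge 0} \mathcal{H}_k,
\]
where $\mathcal{H}_k$ is the space of spherical harmonics of degree $k$ and is an eigenspace of $-\Delta_{\Sph^{n-1}}$ with eigenvalue $\lambda_k = k(k+n-2)$. Here $\mathcal{H}_0 = \R \cdot 1$ consists of constants, and $\mathcal{H}_1$ is precisely the space of restrictions of linear functions on $\R^n$ to $\Sph^{n-1}$, with eigenvalue $\lambda_1 = n-1$. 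This decomposition and the identification of eigenvalues are classical, obtained from separation of variables for harmonic polynomials on $\R^n$, and I would cite them rather than reprove them.

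Given $f \in C^1(\Sph^{n-1})$ with zero average, the hypothesis says $f \perp \mathcal{H}_0$, so I can expand $f = \sum_{k \ge 1} f_k$ with $f_k \in \mathcal{H}_k$. By Parseval,
\[
\int_{\Sph^{n-1}} f^2 \, dx = \sum_{k \ge 1} \|f_k\|_{L^2}^2,
\]
while Green's identity on the sphere yields
\[
\int_{\Sph^{n-1}} \|\nabla f\|^2 \, dx = -\int_{\Sph^{n-1}} f \, \Delta f \, dx = \sum_{k \ge 1} \lambda_k \|f_k\|_{L^2}^2.
\]
Since $\lambda_k \ge \lambda_1 = n-1$ for every $k \ge 1$, the inequality $\int \|\nabla f\|^2 \ge (n-1)\int f^2$ follows term by term. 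Equality forces $\|f_k\|_{L^2}^2 = 0$ for all $k \ge 2$, so $f \in \mathcal{H}_1$, i.e.\ $f$ is the restriction to $\Sph^{n-1}$ of a linear function on $\R^n$, as claimed.

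There is no substantive obstacle beyond invoking the spectral input. The only mild technical point is that $f \in C^1$ does not literally allow $\Delta f$ to be taken pointwise; this is handled either by $C^\infty$-approximation or by phrasing the estimate through the Dirichlet form directly, using the variational characterization
\[
\lambda_1 = \inf\left\{\left. \frac{\int_{\Sph^{n-1}} \|\nabla f\|^2 \, dx}{\int_{\Sph^{n-1}} f^2 \, dx} \,\right|\, f \in H^1(\Sph^{n-1}), \int_{\Sph^{n-1}} f = 0 \right\},
\]
whose minimizers are identified as elements of $\mathcal{H}_1$ via the Euler--Lagrange equation. This is standard elliptic theory and presents no real difficulty.
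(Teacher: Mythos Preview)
Your argument is correct and is exactly the route the paper indicates: the text does not spell out a proof of this theorem but simply remarks that Theorem~\ref{thm:LaplSmHigh} (the spectrum of $\Delta_{\Sph^{n-1}}$) implies it, mirroring the one-dimensional derivation in Section~\ref{sec:SmoothCase}. Your expansion via spherical harmonics and the inequality $\lambda_k \ge \lambda_1 = n-1$ is precisely that implication made explicit, including the equality case.
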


\begin{thm}
\label{thm:LaplSmHigh}
The spectrum of the Laplace-Beltrami operator
\[
\Delta f = \tr \nabla^2 f
\]
on the unit sphere $\Sph^{n-1}$ is $\{-k(k+n-2) \mid k \in \Z\}$. The zero eigenspace consists of the constant functions; the eigenvalue $-(n-1)$ has multiplicity $n$, and the associated eigenspace consists of the restrictions of linear functions on $\R^n$.
\end{thm}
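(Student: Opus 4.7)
The plan is to use the classical identification of eigenfunctions of $\Delta_{\Sph^{n-1}}$ with restrictions of homogeneous harmonic polynomials on $\R^n$. First I would derive, using polar coordinates $x = r\theta$ with $r = \|x\|$ and $\theta \in \Sph^{n-1}$, the splitting
\[
\Delta_{\R^n} = \partial_r^2 + \frac{n-1}{r}\partial_r + \frac{1}{r^2}\, \Delta_{\Sph^{n-1}}.
\]
Applying this to a function of the form $f(x) = r^k \phi(\theta)$ gives
\[
\Delta_{\R^n} f = r^{k-2}\bigl(k(k+n-2)\phi + \Delta_{\Sph^{n-1}}\phi\bigr),
\]
so if $f$ is a homogeneous harmonic polynomial of degree $k$ then $\phi = f|_{\Sph^{n-1}}$ satisfies $\Delta_{\Sph^{n-1}} \phi = -k(k+n-2)\phi$.

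Next I would show that the restrictions $\mathcal{H}_k|_{\Sph^{n-1}}$, $k \ge 0$, together span a dense subspace of $L^2(\Sph^{n-1})$. The standard route is the algebraic decomposition $\mathcal{P}_k = \mathcal{H}_k \oplus \|x\|^2 \mathcal{P}_{k-2}$ of the space of homogeneous polynomials of degree $k$ into harmonic polynomials plus the image of multiplication by $\|x\|^2$; this follows from surjectivity of $\Delta \colon \mathcal{P}_k \to \mathcal{P}_{k-2}$, which in turn is an easy consequence of the fact that multiplication by $\|x\|^2$ is the adjoint of $\Delta$ under the apolar inner product on polynomials. Since $\|x\|^2 \equiv 1$ on the sphere, iterating the splitting shows that every polynomial restricted to $\Sph^{n-1}$ lies in $\bigoplus_{j \ge 0} \mathcal{H}_{k-2j}|_{\Sph^{n-1}}$. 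By Stone--Weierstrass, polynomial restrictions are dense in $C(\Sph^{n-1})$ and hence in $L^2(\Sph^{n-1})$, giving the required density.

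The spaces $\mathcal{H}_k|_{\Sph^{n-1}}$, being eigenspaces for distinct eigenvalues of the formally self-adjoint operator $\Delta_{\Sph^{n-1}}$, are mutually $L^2$-orthogonal; together with density this forces them to exhaust the eigenspace decomposition, and the spectrum is exactly $\{-k(k+n-2) : k \ge 0\}$. The explicit identification of the low-degree eigenspaces is immediate: $\mathcal{H}_0 = \R$ gives the constants (eigenvalue $0$), while every linear form on $\R^n$ is automatically harmonic, so $\mathcal{H}_1$ has dimension $n$ and its restrictions form the $-(n-1)$-eigenspace. The main obstacle is the completeness step in the middle paragraph, specifically establishing the surjectivity of $\Delta \colon \mathcal{P}_k \to \mathcal{P}_{k-2}$ (equivalently, the direct sum decomposition of $\mathcal{P}_k$); once this is in hand the eigenvalue calculation and orthogonality yield the theorem with no further work.
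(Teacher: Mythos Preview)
The paper does not prove this theorem: it is stated without proof in Section~\ref{sec:HighDim} as a classical fact (the analogue of Theorem~\ref{thm:LaplSm}, which is likewise stated without proof). Your proposal is the standard textbook argument via homogeneous harmonic polynomials, and it is correct; the polar decomposition of $\Delta_{\R^n}$, the decomposition $\mathcal{P}_k = \mathcal{H}_k \oplus \|x\|^2 \mathcal{P}_{k-2}$, Stone--Weierstrass density, and orthogonality of eigenspaces together yield exactly the statement. There is therefore nothing to compare against in the paper itself.

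One small remark: the paper writes the spectrum as $\{-k(k+n-2)\mid k\in\Z\}$, presumably by analogy with the $n=2$ case $\{-k^2\mid k\in\Z\}$, but for $n>2$ the map $k\mapsto -k(k+n-2)$ is not even on $\Z$, so the intended index set is $k\ge 0$, as you wrote.
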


\begin{thm}
\label{thm:AvWidth}
Among all convex bodies in $\R^n$ with smooth boundary and with the average width $2$, the unit ball has the largest average projection area to the $2$-dimensional subspaces.
\end{thm}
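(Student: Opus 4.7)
The plan is to adapt the Blaschke--Wirtinger argument of Section~\ref{sec:SmoothCase} to higher dimensions, using Theorem~\ref{thm:WirtSmHigh} in place of the circle Wirtinger inequality. First I would translate the statement into quermassintegral language. By the projection formula recalled in Section~\ref{sec:Quermass}, the average projection area of $K$ to $2$-planes is a positive multiple of $W_{n-2}(K)$; and the average width of $K$ equals $\frac{2}{\vol(\Sph^{n-1})}\int_{\Sph^{n-1}} h\,d\nu$, i.e.\ a positive multiple of $W_{n-1}(K)$. The normalization ``average width $=2$'' therefore says $W_{n-1}(K)=W_{n-1}(B^n)$, and the theorem reduces to
\[
W_{n-1}(K)=W_{n-1}(B^n) \;\Longrightarrow\; W_{n-2}(K)\le W_{n-2}(B^n).
\]

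Next, write the support function of $K$ as $h=1+f$. The $W_{n-1}$-formula from Section~\ref{sec:Quermass} together with $W_{n-1}(K)=W_{n-1}(B^n)$ gives $\int_{\Sph^{n-1}} f\,d\nu=0$. Plugging $h=1+f$ into
\[
n W_{n-2}(K) = \int_{\Sph^{n-1}} h\bigl((n-1)h + \Delta h\bigr)\,d\nu
\]
and expanding, every term linear in $f$ vanishes: the pieces $(n-1)f$ by zero mean, the piece $\Delta f$ by Stokes' theorem on $\Sph^{n-1}$. Integration by parts turns $\int f\,\Delta f\,d\nu$ into $-\int \|\nabla f\|^2\,d\nu$, so what remains is
\[
n W_{n-2}(K) = (n-1)\vol(\Sph^{n-1}) + \int_{\Sph^{n-1}} \bigl[(n-1)f^2 - \|\nabla f\|^2\bigr]\,d\nu.
\]
Since $h\equiv 1$ for the unit ball gives $n W_{n-2}(B^n) = (n-1)\vol(\Sph^{n-1})$, the inequality is equivalent to the bracketed integral being non-positive.

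The conclusion is then immediate from Theorem~\ref{thm:WirtSmHigh}: because $f$ has zero average, $(n-1)\int f^2\,d\nu \le \int \|\nabla f\|^2\,d\nu$, so the correction term is $\le 0$, which proves $W_{n-2}(K)\le W_{n-2}(B^n)$. The equality clause of Theorem~\ref{thm:WirtSmHigh} identifies the extremals as those $K$ whose support function is $1$ plus the restriction of a linear form on $\R^n$, i.e.\ the translates of $B^n$. There is no real obstacle in the argument; the only thing to be careful about is the regularity required for the $W_{n-2}$-formula and the integration by parts, both of which are standard for convex bodies with smooth boundary, as in the hypothesis.
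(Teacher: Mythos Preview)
Your proposal is correct and is exactly the argument the paper has in mind: the paper does not spell out a proof of Theorem~\ref{thm:AvWidth} but simply remarks that it follows from Theorem~\ref{thm:LaplSmHigh} (equivalently Theorem~\ref{thm:WirtSmHigh}) via the formulas for $W_{n-1}$ and $W_{n-2}$, which is precisely the computation you carried out.
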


\subsection{The discrete Laplacian and the discrete Lichnerowicz conjecture}
The quermassintegrals are defined for all convex bodies, and in particular for convex polyhedra. For a convex polyhedron $P(h)$ with fixed outward unit facet normals $\nu_1, \ldots, \nu_n$ and varying support numbers $h_1, \ldots, h_n$, the quermassintegral $W_{n-2}(h)$ is a quadratic form in $h$, provided that the combinatorial type of $P(h)$ does not change.

\begin{dfn}
Let $\nu_1, \ldots, \nu_n$ be in general position, so that all polyhedra $P(h)$ with $h$ close to ${\bf 1}$ have the same combinatorics.
Denote by $M = M(\nu)$ the symmetric $n \times n$-matrix such that
\[
W_{n-2}(h) = \langle Mh, h \rangle
\]
\end{dfn}
Due to the last formula from Section \ref{sec:Quermass}, the self-adjoint operator $M$ is the discrete analog of the operator $(n-1)\id + \Delta$.

\begin{thm}
\label{thm:WirtDiscrHigh}
If $x \in \R^n$ is such that $\langle Mx, {\bf 1} \rangle = 0$, then $\langle Mx, x \rangle \le 0$. Equality holds if and only if there is $X \in \R^n$ such that $x_i = \langle X, \nu_i \rangle$ for all $i$.
\end{thm}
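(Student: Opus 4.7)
My plan is to identify the quadratic form $\langle Mh, h\rangle = W_{n-2}(P(h))$ with the mixed volume $V(P(h), P(h), B, \ldots, B)$, where $B$ is the unit ball, and then invoke the Minkowski inequality for mixed volumes (the special case of Alexandrov--Fenchel with $n-2$ arguments fixed as $B$). By polarization together with the Minkowski-additivity $P(h+k) = P(h)+P(k)$ of support functions, the bilinear form satisfies $\langle Mh, k\rangle = V(P(h), P(k), B, \ldots, B)$, so in particular
\[
\langle Mh, {\bf 1}\rangle = V(P(h), B, \ldots, B) = W_{n-1}(P(h)), \qquad \langle M{\bf 1}, {\bf 1}\rangle = W_n(B) = \vol(B) > 0.
\]

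Next I would apply the Minkowski inequality: for convex bodies $K, L \subset \R^n$,
\[
V(K, L, B, \ldots, B)^2 \;\ge\; V(K, K, B, \ldots, B) \cdot V(L, L, B, \ldots, B).
\]
Plug in $L = B$ and $K = P({\bf 1} + \epsilon x)$, a valid convex polytope of the prescribed combinatorial type for all sufficiently small $\epsilon > 0$. Both sides are polynomials in $\epsilon$; matching the coefficients of $\epsilon^2$ yields, for every $x \in \R^n$,
\[
\langle Mx, {\bf 1}\rangle^2 \;\ge\; \langle Mx, x\rangle \cdot \langle M{\bf 1}, {\bf 1}\rangle.
\]
Since $\langle M{\bf 1}, {\bf 1}\rangle > 0$ and $\langle Mx, {\bf 1}\rangle = 0$ by hypothesis, this forces $\langle Mx, x\rangle \le 0$. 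The $\epsilon$-expansion painlessly extends the inequality from honest convex polytopes to every $x \in \R^n$ (i.e.\ to virtual polytopes).

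For the equality case, I would invoke the equality clause of the Minkowski inequality: equality forces $P({\bf 1} + \epsilon x)$ and $B$ to be homothetic, which at the level of support functions reads $x_i = c + \langle X, \nu_i\rangle$ for some $c \in \R$ and $X$ in the ambient Euclidean space. Since the spherical mean of a linear function vanishes, one has $\langle M(\langle X, \nu_\cdot\rangle), {\bf 1}\rangle = W_{n-1}(\{X\}) = 0$, so the constraint $\langle Mx, {\bf 1}\rangle = 0$ collapses to $c\cdot\langle M{\bf 1}, {\bf 1}\rangle = 0$, forcing $c = 0$ and leaving $x_i = \langle X, \nu_i\rangle$ as claimed.

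The main obstacle is the invocation of the Minkowski--Alexandrov--Fenchel inequality itself: it is a classical but deep input, and it is essentially the unique ingredient that replaces the elementary spectral-gap argument available in dimension one. The equality clause is likewise classical but delicate; it enters smoothly here because translations of $P(h)$ are manifestly in $\ker M$, so the predicted equality set $\{x_i = \langle X, \nu_i\rangle\}$ is already baked into the geometry and one only has to rule out the one-parameter dilation direction, which the hypothesis $\langle Mx, {\bf 1}\rangle = 0$ does for free.
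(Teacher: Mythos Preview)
Your overall strategy---reduce to the Minkowski/Alexandrov--Fenchel inequality for mixed volumes---is exactly what the paper has in mind (it says in the introduction that ``the signature of $M$ follows from the Minkowski inequality for mixed volumes'' and defers details to \cite{Izm14}).  However, your execution contains a genuine error that breaks the argument.

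You conflate the unit ball $B$ with the circumscribed polytope $P({\bf 1})$.  By your own polarization step, $\langle Mh,{\bf 1}\rangle = V(P(h),P({\bf 1}),B,\ldots,B)$, which is \emph{not} $W_{n-1}(P(h))=V(P(h),B,\ldots,B)$; likewise $\langle M{\bf 1},{\bf 1}\rangle = W_{n-2}(P({\bf 1}))$, not $\vol(B)$.  This matters because with your choice $L=B$ and $K=P({\bf 1}+\epsilon x)$ the difference
\[
V(K,B,\ldots,B)^2 - V(K,K,B,\ldots,B)\cdot V(B,\ldots,B)
\]
has a strictly positive constant term at $\epsilon=0$ (it equals $W_{n-1}(P({\bf 1}))^2 - W_{n-2}(P({\bf 1}))\vol(B)>0$, since a polytope is never a ball).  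Hence nothing whatsoever can be concluded about the $\epsilon^2$-coefficient, and your displayed inequality $\langle Mx,{\bf 1}\rangle^2 \ge \langle Mx,x\rangle\langle M{\bf 1},{\bf 1}\rangle$ does not follow.  The same confusion derails the equality discussion: ``$P({\bf 1}+\epsilon x)$ homothetic to $B$'' is impossible for any $\epsilon$.

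The fix is immediate: take $L=P({\bf 1})$ instead of $L=B$ in the Alexandrov--Fenchel inequality $V(K,L,B,\ldots,B)^2\ge V(K,K,B,\ldots,B)\,V(L,L,B,\ldots,B)$.  Then both sides are quadratic in $\epsilon$ with matching $\epsilon^0$ and $\epsilon^1$ terms, and the $\epsilon^2$-coefficient gives precisely $\langle Mx,{\bf 1}\rangle^2 \ge \langle Mx,x\rangle\,\langle M{\bf 1},{\bf 1}\rangle$.  The equality clause then reads ``$P({\bf 1}+\epsilon x)$ homothetic to $P({\bf 1})$'', i.e.\ $x_i=c+\langle X,\nu_i\rangle$, and your final paragraph (forcing $c=0$ from $\langle Mx,{\bf 1}\rangle=0$) goes through unchanged.
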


It is possible to define the matrix $M$ for any triangulation of $\Sph^{n-1}$ whose simplices are equipped with a spherical metric.

\begin{conj}
If all cone angles in a spherical cone metric on $\Sph^{n-1}$ are less that $2\pi$, and the triangulation is Delaunay, then
\[
\langle Mx, {\bf 1} \rangle = 0 \Rightarrow \langle Mx, x \rangle \le 0
\]
\end{conj}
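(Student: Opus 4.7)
The conjecture is a discrete Lichnerowicz-Obata inequality: on the $M$-orthogonal complement of the distinguished direction ${\bf 1}$, the quadratic form $\langle M\cdot,\cdot\rangle$ should be negative semidefinite. Equivalently, $M$ has positive index exactly $1$, and ${\bf 1}$ spans the positive direction. My proposed approach follows the two proofs already given in the paper: identify the kernel geometrically, then use a continuous deformation to transport the signature from a model where it is known.

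\emph{Step 1 (base case).} For the round metric on $\Sph^{n-1}$ triangulated by the normal fan of a convex polytope in $\R^n$, the inequality is exactly Theorem \ref{thm:WirtDiscrHigh}, so $M$ has positive index $1$ and $\langle M{\bf 1},{\bf 1}\rangle>0$.

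\emph{Step 2 (geometric identification of the kernel).} Mimicking Lemma \ref{lem:Rank}, I would associate to each $x\in\R^N$ a family of locally parallel vector fields $X_\sigma$, one per top-dimensional simplex $\sigma$ of the triangulation, determined in a developing chart by $\langle X_\sigma,\nu_i\rangle=x_i$ for the facet normals $\nu_i$ of $\sigma$. The relation $Mx=0$ should express that adjacent $X_\sigma$, $X_{\sigma'}$ agree across their shared codimension-$1$ face, so $\ker M$ is identified with the space of parallel vector fields on the complement of the cone singularities. Under the hypothesis that every cone angle is strictly less than $2\pi$, monodromy around each singular point is a nontrivial rotation, and this constrains the possible global parallel fields: the resulting dimension $k(g,\mathcal{T})$ is at most $n$ and locally constant on the stratum of cone metrics with fixed singular pattern. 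Similarly, $\langle M{\bf 1},{\bf 1}\rangle$ should be proportional to $W_{n-2}$ of the "$1$-circumscribed" configuration, whose positivity should follow once all cone angles are $<2\pi$.

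\emph{Step 3 (deformation).} Connect the given Delaunay cone metric $g$ to a smooth or polyhedral model through a one-parameter family $g_t$ of spherical cone metrics with cone angles $<2\pi$, each carrying its Delaunay triangulation $\mathcal{T}_t$. Between isolated Delaunay flips, $M_t$ depends continuously on $t$ and its signature is locally constant by Step 2. Across a bistellar flip, one should verify that $M_t$ transforms by a localized, rank-preserving move (a higher-dimensional analog of the cotangent flip identity), so the signature and the distinguished positive direction ${\bf 1}$ persist. The signature of $M_1$ from Step 1 then transports to $M_0$, giving the conjecture.

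\textbf{Main obstacle.} The real difficulty sits in Step 2 together with flip-invariance in Step 3: showing that $M$ has positive index exactly $1$ on a general Delaunay spherical cone manifold is tantamount to an Alexandrov-Fenchel-type inequality for mixed volumes of generalized polytopes in a spherical cone-manifold of positive singular curvature, and no such inequality is available in the singular setting. I would expect the heart of the argument to be either (a) an infinitesimal rigidity theorem for convex "polytopes" in a cone-manifold of positive curvature, in the spirit of Schlenker's hyperbolic cone-manifold rigidity, or (b) a discrete Bochner-type identity that reduces the global statement to the one-dimensional Theorem \ref{thm:WirtDiscr} applied to the link of each singular vertex; in the latter route, the assumption "cone angle $<2\pi$" enters exactly as the one-dimensional hypothesis $\sum_i\alpha_i\le 2\pi$ of Theorem \ref{thm:LaplDiscr}, which is what makes this plausible.
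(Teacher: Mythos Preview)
This statement is a \emph{conjecture} in the paper and carries no proof; the paper only asserts the case $n=3$, sketching two ingredients and deferring details to \cite{Izm14}. Your plan mirrors that sketch. For the rank/kernel control (your Step~2), the paper says one ``applies the argument from the proof of Theorem~\ref{thm:WirtDiscrHigh}'' together with ``the negative semidefiniteness of the quadratic forms of the links of vertices (Theorem~\ref{thm:WirtDiscr})''---this is precisely your route~(b), with the cone-angle hypothesis entering through the one-dimensional link inequality exactly as you say. For the signature transport (your Step~3), the paper likewise deforms the cone metric on $\Sph^2$ to the round one while keeping curvatures nonnegative, and explicitly flags this step as ``difficult to perform in higher dimensions,'' proposing a ``discrete Weitzenb\"ock formula'' as the missing tool---again your route~(b).

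So there is no discrepancy: you have reconstructed the paper's strategy and correctly located the same obstruction. Your discussion of bistellar flips in Step~3 goes slightly beyond what the paper states even for $n=3$, but that issue is absorbed into the cited reference. Neither you nor the paper supplies a proof for $n\ge 4$; the conjecture is open there.
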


This conjecture is true for $n=3$. One applies the argument from the proof of Theorem \ref{thm:WirtDiscrHigh} to show that $\langle Mx, x \rangle$ has largest possible rank. See \cite{Izm14} for details. This argument uses the negative semidefiniteness of the quadratic forms of the links of vertices (Theorem \ref{thm:WirtDiscr}), thus the positive curvature condition is essential. Then one deforms the spherical cone metric on $\Sph^2$ to the nonsingular spherical metric, keeping all curvatures non-negative. This last step seems difficult to perform in higher dimensions. Ideally one would like to derive a ``discrete Weitzenboeck formula'' in analogy to the proof of the Lichnerowicz theorem.


\begin{thebibliography}{10}

\bibitem{Alz91}
H.~Alzer.
\newblock Converses of two inequalities of {K}y {F}an, {O}. {T}aussky, and {J}.
  {T}odd.
\newblock {\em J. Math. Anal. Appl.}, 161(1):142--147, 1991.

\bibitem{Bla56}
W.~Blaschke.
\newblock {\em Kreis und {K}ugel}.
\newblock Walter de Gruyter \& Co., Berlin, 1956.
\newblock 2nd edition.

\bibitem{Che83}
J.~Cheeger.
\newblock Spectral geometry of singular {R}iemannian spaces.
\newblock {\em J. Differential Geom.}, 18(4):575--657 (1984), 1983.

\bibitem{Che87}
S.~S. Cheng.
\newblock Discrete quadratic {W}irtinger's inequalities.
\newblock {\em Linear Algebra Appl.}, 85:57--73, 1987.

\bibitem{FTT55}
K.~Fan, O.~Taussky, and J.~Todd.
\newblock Discrete analogs of inequalities of {W}irtinger.
\newblock {\em Monatsh. Math.}, 59:73--90, 1955.

\bibitem{HK98}
C.~D. Hodgson and S.~P. Kerckhoff.
\newblock Rigidity of hyperbolic cone-manifolds and hyperbolic {D}ehn surgery.
\newblock {\em J. Differential Geom.}, 48(1):1--59, 1998.

\bibitem{Izm14}
I.~Izmestiev.
\newblock Infinitesimal rigidity of convex polyhedra through the second
  derivative of the {H}ilbert-{E}instein functional.
\newblock {\em Canad. J. Math.}, 66(4):783--825, 2014.

\bibitem{MM09}
R.~Mazzeo and G.~Montcouquiol.
\newblock Infinitesimal rigidity of cone-manifolds and the {S}toker problem for
  hyperbolic and {E}uclidean polyhedra.
\newblock {\em J. Differential Geom.}, 87(3):525--576, 2011.

\bibitem{MM82}
G.~V. Milovanovi{\'c} and I.~{\v{Z}}. Milovanovi{\'c}.
\newblock On discrete inequalities of {W}irtinger's type.
\newblock {\em J. Math. Anal. Appl.}, 88(2):378--387, 1982.

\bibitem{Mit70}
D.~S. Mitrinovi{\'c}.
\newblock {\em Analytic inequalities}.
\newblock Springer-Verlag, New York-Berlin, 1970.
\newblock In cooperation with P. M. Vasi{\'c}. {D}ie Grundlehren der
  mathematischen Wissenschaften, Band 165.

\bibitem{Red83}
R.~M. Redheffer.
\newblock Easy proofs of hard inequalities.
\newblock In {\em General inequalities, 3 ({O}berwolfach, 1981)}, volume~64 of
  {\em Internat. Schriftenreihe Numer. Math.}, pages 123--140. Birkh\"auser,
  Basel, 1983.

\bibitem{Scn14}
R.~Schneider.
\newblock {\em Convex bodies: the {B}runn-{M}inkowski theory}, volume 151 of
  {\em Encyclopedia of Mathematics and its Applications}.
\newblock Cambridge University Press, Cambridge, expanded edition, 2014.

\bibitem{Shi73}
O.~Shisha.
\newblock On the discrete version of {W}irtinger's inequality.
\newblock {\em Amer. Math. Monthly}, 80:755--760, 1973.

\bibitem{Trei09}
A.~Treibergs.
\newblock Mixed area and the isoperimetric inequality.
\newblock \url{http://www.math.utah.edu/~treiberg/MixedAreaSlides.pdf}.
\newblock Slides of an Undergraduate Mathematics Colloquium talk, University of
  Utah.

\bibitem{Wei09}
H.~Weiss.
\newblock The deformation theory of hyperbolic cone-3-manifolds with
  cone-angles less than $2\pi$.
\newblock {\em Geom. Topol.}, 17:329--367, 2013.

\end{thebibliography}
\end{document}